
\documentclass[francais,twoside,openright,a4paper,draft]{amsart}

\usepackage[francais,english]{babel}     %
\usepackage[OT1]{fontenc}       %
\usepackage[utf8]{inputenc}   %
\newcommand{\comm}[1]{\medskip {\bf \tt [#1]}}
\newcommand{\commev}[1]{\medskip {\small  [#1]}}
\newcommand{\commevf}[1]{\footnote{[#1]}}
\newcommand{\compl}[1]{{\footnote{#1}}}

\renewcommand{\comm}[1]{}
\renewcommand{\commev}[1]{}
\renewcommand{\commevf}[1]{}
\renewcommand{\compl}[1]{}

\usepackage{amssymb,amsmath}           
\usepackage{amsfonts} %
\usepackage{bm}

\usepackage{amsthm} %

\newtheorem{prop}{Proposition}

\newtheorem{theo}[prop]{Th{\'e}or{\`e}me}

\newtheorem{lemm}[prop]{Lemme}
\newtheorem{coro}[prop]{Corollaire}

\newtheorem*{theonono}{Th{\'e}or{\`e}me}

\theoremstyle{definition}
\newtheorem{defi}[prop]{D{\'e}finition}

\theoremstyle{remark}
\newtheorem{rema}[prop]{Remarque}

\newtheorem*{rema*}{Remarque}
\newtheorem*{exem*}{Exemple}
\newtheorem*{remas*}{Remarques}
\newtheorem*{exems*}{Exemples}

\newcommand{\RR}{\mathbb{R}}
\newcommand{\NN}{\mathbb{N}}
\newcommand{\ZZ}{\mathbb{Z}}
\newcommand{\CC}{\mathbb{C}}
\newcommand{\QQ}{\mathbb{Q}}
\newcommand{\KK}{\mathbb{K}}

\newcommand{\FF}{\mathbb{F}}

\newcommand{\calO}{{\mathcal O}}

\newcommand{\eps}{\varepsilon}        %

\newcommand{\goth}[1]{\mathfrak{#1}}
\newcommand{\boldgoth}[1]{\bm{\goth{#1}}}

\newcommand{\cqfd}{\qed}

\newcommand{\ov}{\overline}

\newcommand{\fleche}{\longrightarrow}   %
\newcommand{\tend}{\rightarrow}%

\newcommand{\bord}{\partial}    %

 \newcommand{\restrict}[2]{{#1}_{|#2}}    %

\newcommand{\fonction}[5]{%
{#1} \, :
\begin{array}{ccc}
{#2} & \rightarrow & {#3} \\
{#4} & \mapsto & {#5}
\end{array}   
}

\newcommand{\es}{\emptyset}           %
\newcommand{\abs}[1]{\left| #1 \right|}     %

\newcommand{\pinfty}{{+\infty}}         %
\newcommand{\minfty}{{-\infty}}         %

\newcommand{\Min}{\mathrm{Min}}         %

\newcommand{\limi}%
         {\mathop{\underline{\mathrm{lim}}}\nolimits} %
\newcommand{\lims}%
         {\mathop{\overline{\mathrm{lim}}}\nolimits}  %

 \newcommand{\Hom}{\mathrm{Hom}} %
 \newcommand{\Stab}{\mathrm{Stab}}        
 \newcommand{\Fix}{\mathrm{Fix}}

 \newcommand {\Isom} {{\mathrm{Isom}\:}} %

\newcommand{\Diag}{\mathrm{Diag}}        
\newcommand{\GL}{\mathrm{GL}}        
        
\newcommand{\GLn}{\GL_n}

\newcommand{\SL}{\mathrm{SL}}        
        
\newcommand{\SLn}{\SL_n}

\newcommand{\GLnR}{\GLn(\RR)}

\newcommand{\Ort}{\mathrm{O}}

         \newcommand{\Ad}{\mathrm{Ad}}

 \newcommand{\sqg}{{\goth g}}
 \newcommand{\squ}{{\goth u}}
 \newcommand{\squb}{\boldgoth{u}} %
 \newcommand{\sqgb}{\boldgoth{g}}

 \newcommand{\umoins}{u}
 \newcommand{\upluss}{n}
 \newcommand{\Umoins}{U^-}
 \newcommand{\Uplus}{U^+}
 \newcommand{\UmoinsJ}{\Umoins_J}
 \newcommand{\UplusJ}{\Uplus_J}

 \newcommand{\Fixinfro}{\Fix_\infty(\rho)}

          \newcommand{\np}{non-pa\-ra\-bo\-li\-que}    %
          \newcommand{\nps}{\np s}    %
         \newcommand{\cred}{com\-plè\-te\-ment ré\-duc\-ti\-ble}    
         \newcommand{\creds}{\cred s}    

         \newcommand{\CAT}[1]{\mathrm{CAT}(#1)}

          \newcommand{\la}{\lambda}

         \newcommand{\ga}{\gamma} %
         \newcommand{\Ga}{\Gamma} %

         \newcommand{\maxS}{\max_{s\in S}} %
         \newcommand{\sumS}{\sum_{s\in S}}

         \newcommand{\bordinf}{\bord_\infty}
        \newcommand{\Aa}{\mathbb{A}} %
        \newcommand{\Cc}{{\goth C}} %
          \newcommand{\Cb}{\overline{\goth C}} %

          \newcommand{\Gb}{{\bf G}}
          \newcommand{\Hb}{{\bf H}}
          \newcommand{\Pb}{{\bf P}}
          \newcommand{\Ab}{{\bf A}}
          \newcommand{\Ub}{{\bf U}}
\newcommand{\Diagb}{{\bf \mathrm{Diag}}}

          \newcommand{\zdenses}{Zariski-denses}
          \newcommand{\ovz}[1]{\overline{#1}^Z}

         \newcommand{\Es}{X} %
         \newcommand{\xo}{{x_0}}

         \newcommand{\Rep}{\mathrm{R}} %
         \newcommand{\X}{{\mathcal X}}   %
        
         \newcommand{\psep}{p}  %
         \newcommand{\Xcr}{\X_{cr}} %
         \newcommand{\XGaG}{\X(\Ga,G)}   %

         \newcommand{\RsG}{\Rep/_G}      %
         \newcommand{\Rnp}{\Rep_{np}}    %
         \newcommand{\Rcr}{\Rep_{cr}}    %
         \newcommand{\Rspfi}{\Rep_{0}}    %
         \newcommand{\RGaG}{\Rep(\Ga,G)} %

         \newcommand{\dro}{d_{\rho}}
         \newcommand{\laro}{\la(\rho)}
         \newcommand{\Minro}{\Min(\rho)}

         \newcommand{\roi}{{\rho_i}}
         \newcommand{\droi}{d_{\roi}}
         \newcommand{\laroi}{\la(\roi)}

\author[A. Parreau]{Anne Parreau}
\address{Institut Fourier, UMR 5582\\BP 74\\
  Université de Grenoble I\\ 38402 Saint-Martin-d'Hères cedex, France}
\email{Anne.Parreau@ujf-grenoble.fr}
\title[Espaces de représentations complètement réductibles]
{\small 
Espaces de représentations complètement réductibles}

\begin{document}

\selectlanguage{english}

\begin{abstract}
  We study some geometric properties of actions on nonpositively
  curved spaces related to complete reducibility and semisimplicity,
  focusing on representations of a finitely generated group $\Gamma$
  in the group $G$ of rational points of a reductive group over a
  local field, acting on the associated space (symmetric space or
  affine building).
  We prove that the space of completely reducible classes is the
  maximal Hausdorff quotient space for the conjugacy action of $G$ on
  $\mathrm{Hom}(\Gamma,G)$.
\end{abstract}

\selectlanguage{francais}
\subjclass[2000]{
22E46; 53C35, 20E42, 51F99, 20E45, 14L30
}

\keywords%
{nonpositive curvature, symmetric spaces, affine buildings,  reductive
  groups over local fields, complete reducibility, moduli spaces}

\thanks{Avec le soutien de l'ANR Repsurf : ANR-06-BLAN-0311}

\maketitle

\section*{Introduction}

Soit $\Ga$ un groupe infini, engendré par une partie finie $S$.
Soit $\Es$ un espace métrique $\CAT0$ propre, muni d'une action par
isométries, propre et cocompacte, d'un groupe localement compact $G$.
Nous nous intéressons ici aux propriétés topologiques de l'action par
conjugaison (au but) de $G$ sur l'espace $\Rep=\Hom(\Ga,G)$ des
représentations de $\Gamma$ dans $G$, 
en lien avec les propriétés géométriques des représentations $\rho$ de
$\Rep$ en tant qu'actions de $\Ga$ sur $\Es$.
Notons que $\Rep$ s'identifie naturellement à un fermé de l'espace
$G^S$ des $S$-uplets de $G$ muni de l'action de $G$ par conjugaison
simultanée, via l'application $\rho\mapsto (\rho(s))_{s\in S}$.

L'espace topologique quotient $\RsG$ est en
général loin d'être séparé, ne serait-ce que parce que certaines
classes ne sont pas fermées dans $\Rep$, comme par exemple, 
lorsque $G$ est le groupe linéaire sur un corps local $\KK$,
celle d'une matrice triangulaire supérieure non
diagonale (qui adhère à sa partie diagonale). 
On peut même avoir que toutes les classes de conjugaison soient
fermées sans pour autant que le quotient soit séparé, comme par
exemple dans le cas où ($\Gamma=\ZZ$ et) $\Es$ est le plan euclidien
et $G=\Isom(\Es)$
(en effet une suite de rotations d'angles tendant vers zéro adhère
modulo conjugaison à toutes les translations).

Le cas qui nous intéresse au premier chef est celui où $G=\Gb(\KK)$,
avec $\Gb$ un groupe algébrique réductif connexe défini sur un corps
local $\KK$, agissant sur $\Es$ son espace associé (espace symétrique
dans le cas archimédien, ou immeuble de Bruhat-Tits dans le cas non
archimédien).

Dans ce cadre algébrique, pour $\KK=\RR$, la théorie des actions des
groupes algébriques réductifs réels
permet de construire un bon quotient  $\Rep//G$ à partir des orbites
fermées (\cite{Luna}, \cite{RiSl}). 
Richardson a démontré que dans ce cas les orbites fermées sont celles
des représentations {\em semisimples} (\cite{Richardson}).
La notion de représentation semisimple peut se caractériser
géométriquement par la notion suivante introduite par J.~P. ~Serre
(\cite{SerreCR}),
qui a un sens pour une action $\rho$ sur un espace métrique $\CAT0$
quelconque :
$\rho$ est {\em \cred} (cr) si, lorsque $\rho$ fixe un point $\alpha$
dans le bord à l'infini $\bordinf \Es$ de $\Es$, alors il existe un
point $\beta$ dans $\bordinf \Es$, opposé (i.e. joint par une
géodésique dans $\Es$) à $\alpha$ , également fixé par $\rho$.

Dans cet article, dans un premier temps nous étudions diverses
propriétés géo\-mé\-tri\-ques des actions sur un espace métrique
$\CAT0$ reliées à la complète réductibilité.

 Une action $\rho\in\Rep$ est dite {\em non-pa\-ra\-bo\-li\-que} si
 elle n'a pas de point fixe non trivial à l'infini de $\Es$.  Nous
 montrons (en section \ref{s- actions np sur espace CAT0}) que, dans
 un cadre $\CAT0$ très général, l'action de $G/Z(G)$ sur le
 sous-espace des représentations $\Rnp$ non-paraboliques est propre
 (où $Z(G)$ est le centre de $G$).
\`A une représentation $\rho$ nous associons une fonction convexe
$\dro:\Es\fleche \RR^+$, définie par $\dro(x)=\sqrt{\sumS
  d(x,\rho(s)x)^2}$.
Nous démontrons (section \ref{s- reps cr}, prop. \ref{prop- carac ss
  dans ES}) que, dans le cas des espaces symétriques, il y a
équivalence entre les propriétés naturelles suivantes pour une
représentation $\rho \in\Rep$.

(i) $\rho$ est \cred{}. 

(ii) $\rho$ est \np{} dans un  sous-espace convexe fermé stable de $\Es$.

(iii) La fonction  $\dro :\Es\fleche \RR^+$ atteint sa borne inférieure.

La propriété (ii) a déjà été considérée (voir par exemple
\cite{Labourie}), en lien avec des questions d'existence
d'applications harmoniques. On peut aussi noter que la condition (iii)
est équivalente à l'existence d'une application harmonique
équivariante du graphe de Cayley de $\Gamma$ vers $\Es$.

L'implication (ii)$\Rightarrow$ (iii) est valable dans un espace
$\CAT0$ propre quelconque $\Es$. L'implication (i)$\Rightarrow$ (ii)
est valable plus généralement pour $G$ réductif sur un corps local
agissant $\Es$ son espace symétrique ou immeuble associé mais dans le
cas non archimédien on n'a plus (ii)$\Rightarrow$(i) (donc plus
(iii)$\Rightarrow$(i)) (voir \ref{sss- cex npY mais pas cr}).

Dans un second temps, dans le cas algébrique général ($G$ groupe
réductif sur un corps local $\KK$ quelconque), nous donnons
(section \ref{s- separation}) une démonstration du résultat
suivant.

\begin{theonono}

 \begin{enumerate}
 \item 
\label{pt-  semisimplification}
Toute orbite de $G$ dans $\Rep$ contient dans son adhérence une unique
orbite \cred.

 \item 
\label{pt- quotient  separe}
L'espace topologique quotient $\Xcr=\Rcr/G$ de l'espace $\Rcr$ des
re\-pré\-sen\-ta\-tions cr est le plus gros quotient séparé de $\Rep$
sous $G$.
 \end{enumerate}

\end{theonono}

Dans le cas où $\KK=\RR$, ce résultat découle de \cite{Luna},
\cite{Richardson}, et \cite{RiSl}.
Pour $\KK$ de caractéristique $0$, on peut déduire les résultats
ci-dessus de \cite{Bremigan}, en utilisant que les orbites \creds{}
sont les orbites fermées (ce qui découle de \cite{Richardson} et
\cite{Bremigan}).
La démonstration que nous donnons ici est nouvelle, indépendante et
plus directe.  Elle traite de manière unifiée tous les corps locaux
sans distinction de caractéristique, dont le cas nouveau
de la caractéristique non nulle.
 Les méthodes utilisées proviennent uniquement de la géométrie en
courbure négative ou nulle et des propriétés de base des groupes
algébriques réductifs sur les corps locaux.

On utilise ce résultat dans \cite{ParComp}, où l'on construit une
compactification naturelle de $\Xcr$.

\medskip

{\bf Remerciements.} Je remercie Frédéric Paulin pour son soutien et
ses commentaires, ainsi que Michel Brion et Philippe Eyssidieux pour
des discussions instructives sur la théorie géométrique des
invariants et sur l'application moment.

\section{Notations et rappels}

Dans tout cet article, on se fixe un groupe $\Ga$ infini, de type
fini, discret, une partie gé\-né\-ra\-tri\-ce finie $S$ de $\Ga$,
et $G$ un groupe topologique métrisable, localement compact,
dénombrable à l'infini (union dénombrable de compacts), donc à base
dénombrable (d'ouverts).
Pour $g,h\in G$ on note $i_g(h)=ghg^{-1}$ la conjugaison par $g$. 
On note $Z(G)$ le centre de $G$.
On rappelle qu'une action continue de $G$ sur un espace topologique
$E$ localement compact est {\em propre} si l'application $G\times
E\fleche E\times E,\ (g,x)\mapsto (x, gx)$ est propre, ou, de manière
équivalente, si pour tous compacts $K,L$ de $E$, l'ensemble $\{g\in G,
\ gK\cap L\neq \emptyset\}$ est compact.
On note $\RGaG$ ou $\Rep$ l'espace $\Hom(\Ga,G)$ des représentations
$\rho$ de $\Ga$ dans $G$, muni de la topologie de la convergence simple.
On s'intéresse à l'action de $G$ par conjugaison (au but) sur $\Rep$
(notée $g\cdot\rho=i_g\circ\rho$). 
On note $\XGaG$ ou $\X$ l'espace topologique quotient.
L'espace $\Rep$ s'identifie à un fermé de $G^S$, par l'application
$\rho\mapsto (\rho(s))_{s\in S}$, qui est un homéomorphisme
$G$-équivariant sur son image.
En particulier, $\Rep$ est métrisable, à base dénombrable, localement
compact, dénombrable à l'infini, car $G$ l'est.
\label{TG equivalence des deux topologies naturelles}
Si $A$ est une partie $G$-stable de $\Rep$, l'ensemble quotient $A/G$
sera muni de la topologie quotient, dont on rappelle qu'elle coïncide
avec la topologie induite par l'inclusion dans $\X=\Rep/G$
\cite[Ch. III, § 2, prop. 10]{BouTG}.

\subsection{Espaces métriques $\CAT0$}
\label{ss- notations CAT0}

Dans tout cet article, l'espace $\Es$ est un espace métrique
$\CAT0$ (on renvoie par exemple à \cite{BrHa} pour la
définition et les propriétés de ces espaces) propre (c'est-à-dire dont
les boules fermées sont compactes; en particulier $\Es$ est complet et
localement compact),
muni d'une action de $G$ par isométries, continue et propre.
Une action de $\Ga$ sur $\Es$ désigne dorénavant une
action par isométries dans $G$, c'est-à-dire un élément de
$\Rep=\Hom(\Ga,G)$.
On rappelle que la propriété fondamentale des espaces métriques
$\CAT0$ est que la distance $d$ est convexe (en
restriction aux géodésiques).

\subsubsection{Bord à l'infini, sous-groupes paraboliques, faisceaux}
On note $\bordinf \Es$ le bord à l'infini de $\Es$, dont on rappelle
qu'il est formé des classes de rayons géodésiques asymptotes (i.e. à
distance bornée). 
Le stabilisateur dans $G$ d'un point $\alpha$ de $\bordinf\Es$ sera
noté $P_\alpha$ et appelé sous-groupe {\em parabolique} de $G$.
\label{ss- def points oppposes et faisceaux CAT0}
Deux points $\alpha$ et $\beta$ de $\bordinf\Es$ sont dits {\em
  opposés} s'il existe une géodésique dans $\Es$ les joignant. On note
$G_{\alpha\beta}=P_\alpha\cap P_\beta$ le sous-groupe des éléments
fixant simultanément $\alpha$ et $\beta$, et $\Es_{\alpha\beta}$ la
réunion des géodésiques de $\beta$ à $\alpha$, qui est un sous-espace
convexe fermé de $\Es$ (qu'on appelera {\em faisceau}).
On notera $\Fixinfro$ l'ensemble des points fixes d'une action 
$\rho\in \Rep$ dans le bord à l'infini de $\Es$.

\subsubsection{Facteur translaté}
\label{sss- facteur translate}
Sauf indication contraire, le produit $\Es=\Es_1\times \Es_2$ de deux
espaces métriques $\Es_1$ et $\Es_2$ sera toujours muni de la distance
produit $d_\Es=\sqrt{d_{\Es_1}^2 + d_{\Es_2}^2}$. On dira que $\Es_1$
est un facteur de $\Es$ si $\Es$ est isométrique à un produit
$\Es_1\times \Es_2$.  Le bord à l'infini $\bordinf \Es$ de $\Es$ est
alors le joint sphérique \cite[5.13]{BrHa} de $\bordinf\Es_1$ et de
$\bordinf\Es_2$ (pour la distance de Tits).
Le {\em facteur translaté (maximal)} du $G$-espace $\Es$ est le
facteur (dans une décomposition en produit préservée par $G$)
euclidien
 maximal $\Es_0$ sur lequel le groupe $G$ agit par translations.
On note alors $\Es=\Es_0\times{}\Es'$.
On a que $G$ (donc toute action) fixe point par point le bord
$\bordinf\Es_0$ du facteur translaté, et préserve $\bordinf\Es'$.

Par exemple, si $\alpha$ et $\beta$ sont deux points opposés de
$\bordinf\Es$, l'action de $G_{\alpha\beta}$ sur $\Es_{\alpha\beta}$
possède un facteur translaté non trivial naturel.
Un autre exemple d'action possédant un facteur translaté non trivial
est fourni par l'action du groupe $G=\GLnR$ sur son espace symétrique
associé $\Es=\GLn(\RR)/\Ort(n)$ (le facteur translaté maximal est ici
la droite correspondant aux orbites du centre
$Z(G)=\RR^*\mathrm{Id}$).

\subsection{Pour les groupes réductifs sur les corps locaux}
\label{s- hyp et notations groupes reductifs}
\`A partir de la section \ref{ss- cr dans les groupes reductifs}, on
se placera dans le cadre plus restreint des groupes $G$ réductifs sur
les corps locaux, agissant sur $\Es$ leur espace symétrique ou
immeuble affine associé, c'est-à-dire dans le cadre suivant.

\subsubsection{Le corps local $\KK$.}
Voir par exemple \cite[0.31]{Margulis}, \cite{BouACval}.
Soit $\KK$ un corps local, c'est-à-dire
 un corps (commutatif) localement compact non discret. 
On rappelle qu'un tel corps peut être muni d'une valeur absolue
$\abs{\cdot}$, essentiellement unique,
et que, 
si $\KK$ est archimédien, on a $\KK=\RR$ ou $\CC$,
et sinon (cas non-archimédien) $\abs{\cdot}$ est ultramétrique (et la
valuation $\omega=-\log\abs{\cdot}$ associée
 est discrète), $\KK$ est totalement discontinu, et $\KK$ est 
ou bien (en caractéristique nulle) une extension finie du corps des nombres
$p$-adiques $\QQ_p$, où $p$ est un nombre premier quelconque,
ou bien (en caractéristique positive $p$) le corps
$\FF_q((T))$ des séries formelles de Laurent  à coefficients dans un corps fini
$\FF_q$.

\subsubsection{Le groupe $G$ et l'espace métrique $\CAT 0$   associé $\Es$}
On considère  un groupe algébrique linéaire $\Gb$
connexe, réductif, défini sur  $\KK$
(par exemple $\Gb=\GLn$). 
Dans le cas où $\KK=\RR$, on suppose que $G$ est un sous-groupe fermé
de $\Gb(\RR)$ contenant sa composante neutre $\Gb(\RR)^0$.
Si $\KK\neq \RR$, on suppose  que $G=\Gb(\KK)$.
On renvoie à \cite{Borel-memo} pour la définition et un résumé des
propriétés des groupes algébriques réductifs sur un corps quelconque.

Le groupe $G$ est muni de la topologie induite par celle de $\KK$.
C'est un groupe topologique métrisable, localement compact,
dénombrable à l'infini.
Dans le cas où le corps $\KK$ est archimédien, on note $\Es$
l'espace symétrique riemannien sans facteur compact
associé à $G$. 
On renvoie à \cite{Helgason} et à \cite{Ebe} pour les propriétés
utilisés ci-dessous des groupes de Lie réels réductifs et de leurs
espaces symétriques associés (\cite[24.6]{Borel} permet de se ramener
au cas des groupes $G$ connexes).
Rappelons que $\Es=G/K$, où $K$ est un sous-groupe compact maximal de
$G$, et que cette variété est munie de la métrique riemannienne
induite par une métrique sur $G$ invariante à gauche par $G$ et à
droite par $K$.

Dans le cas non archimédien, on note $\Es$ l'immeuble de Bruhat-Tits
de $\Gb$ sur $\KK$ \cite[Sec. 2]{Tits79},
qui est un immeuble affine localement compact.
Pour la définition et les propriétés des immeubles affines (aussi
appelés euclidiens), d'un point de
vue métrique, on renvoie par exemple à \cite{ParImm}, où on pourra
aussi trouver une construction simple de l'immeuble de Bruhat-Tits de
$\Gb=\SLn$, ou à  \cite{RousseauEe04} 
(voir aussi \cite[Chap VI et Chap. V, Sec. 8]{Brown}).
Pour l'immeuble de Bruhat-Tits associé à un groupe
réductif $\Gb$ plus général  et pour ses propriétés utilisées et non
démontrées ci-dessous, on renvoie pour une bonne introduction à
\cite{RousseauEe04}, et  à \cite{Tits79}, et pour référence complète 
à \cite{BrTi}.

Les espaces symétriques et les immeubles euclidiens ont de
nombreuses propriétés en commun. 
Dans ce qui suit on a choisi de traiter autant que possible les deux
cas simultanément (les quelques références données ponctuellement
concernent le cas non archimédien, moins connu).
Le vocabulaire et les notations sont choisis par analogie avec le cas
archimédien (cas des espaces symétriques), et peut donc différer de ce qu'on
trouve usuellement dans la littérature sur les immeubles. 

Dans tous les cas, donc,  $\Es$ est un espace métrique $\CAT0$ 
propre, 
et $G$ agit continûment, 
proprement 
(pas nécessairement fidèlement),
isométriquement, 
cocompactement  sur $\Es$.
Les hypothèses de la section \ref{ss- notations CAT0} précédente sont
donc satisfaites (et on en reprendra les notations).

Notons que, dans le cas où $\Gb$ n'est pas semisimple (par exemple
pour $\Gb=\GLn$), l'espace $\Es$ possède ici un facteur euclidien
translaté par $G$ non trivial $\Es_0$, correspondant au centre de $\Gb$.

\subsubsection{Tore $A$ et plat standard $\Aa$}
Un {\em plat} d'un espace métrique $\CAT0$ désigne un sous-espace
convexe fermé de $\Es$, isométrique à un espace euclidien $\RR^m$.
Les plats maximaux (i.e. de dimension maximale) d'un immeuble affine
sont ses appartements.

On fixe une fois pour toute un plat maximal $\Aa$ de $\Es$. Il lui
correspond un tore déployé sur $\KK$ maximal $\Ab$ de $\Gb$, de telle
manière que
le tore $A=\Ab(\KK) \cap G$ de $G$ stabilise le plat $\Aa$, et agit
dessus par translations (cocompactement).
Par exemple, dans le cas où $\Gb=\GLn$, on prend le plat $\Aa$
associé au tore $\Ab=\Diagb$ des matrices diagonales.

On choisit un point $\xo$ de $\Aa$ (dans le cas non
archimédien, un point spécial).
On identifiera $\Aa$ et son espace vectoriel sous-jacent en prenant
$\xo$ comme origine.
On notera $\nu :A\fleche \Aa$ le morphisme qui à $a$
associe le vecteur de la translation correspondante. 
On peut relier relier ce vecteur de translation aux caractères sur le
tore $A$, de la manière suivante. 
On note $X^*(\Ab)=\Hom(\Ab, \mathrm{GL}_1)$
le groupe des caractères 
du tore $\Ab$.
On rappelle (cf \cite[0.2]{Tits79})
que tout caractère $\chi\in
X^*(\Ab)$ induit une forme linéaire sur $\Aa$, qu'on notera
également $\chi$,
telle que  $\chi(\nu(a))= \log\abs{\chi(a)}$ pour tout
$a$ de $A$.
Par exemple, pour $\Gb=\GLn$, si pour $i=1,\ldots, n$ on note $\eps_i
:\Ab\mapsto \GL_1$ le caractère $\Diag(a_1,\ldots , a_n)\mapsto a_i$,
on peut identifier $\Aa$ à $\RR^n$ (euclidien) de telle sorte que les
$\eps_i$, vus comme formes linéaires sur $\Aa$, soient les coordonnées
canoniques.
L'action de $a=\Diag(a_1,\ldots , a_n)\in A$ sur $\Aa=\RR^n$ est alors
la translation de vecteur $\nu(a)=(\log\abs{a_i})_{1\leq  i\leq n}$.

\subsubsection{Racines et chambre de Weyl}
Soit $\Phi\subset  X^*(\Ab)\subset \Aa^*$ le système de racines de $\Gb$
relatives au tore $\Ab$ \cite[6.3]{Borel-memo}.
Les {\em murs} de $\Aa$ sont les noyaux des racines $\varphi$ de $\Phi$.
Le {\em groupe de Weyl} $W$ est le groupe (fini) d'isométries de $\Aa$
engendré par les réflexions par rapport aux murs. 
Il fixe point par point l'intersection $\Aa_0$ des murs de $\Aa$ 
(qui est le facteur euclidien $G$-translaté maximal $\Es_0$ de $\Es$).
On choisit une {\em chambre de Weyl} $\Cc$ de $\Aa$ (dite {\em standard})
(i.e. une composante connexe du complémentaire de la réunion des murs)
et on note $\Phi^+$ l'ensemble des racines {\em positives}
(i.e. positives sur $\Cc$) et $\Lambda$ l'ensemble de racines {\em
  simples} (i.e. les racines positives qui ne se décompose pas en
somme non triviale de racines positives) correspondants.
Par exemple, dans le cas où $\Gb=\GLn$ (avec les choix ci-dessus),
l'ensemble des
racines est $\Phi=\{\varphi_{ij}=\eps_i-\eps_j,\ i\neq j\}$.
 On prend comme chambre de Weyl standard
$\Cb=\{(x_1,\ldots,x_n)\in\RR^n\ |\ x_1\geq \cdots \geq
x_n\}$.  Alors l'ensemble des racines simples est
$\Lambda=\{\varphi_{i,i+1},\ 1\leq i<n-1\}$.

\subsubsection{Sous-tores $A_I$,  plats $\Aa_I$ et facettes $\Cc_I$ standards }
Les facettes de la chambre standard $\Cc$ sont paramétrées par les
parties $I$ de $\Lambda$, de la manière suivante.
Pour une partie $I$ de $\Lambda$, on note $\Aa_I$ le sous-espace
vectoriel $\cap_{\varphi\in I} \ker\varphi$ de $\Aa$
et $\Cc_I$ la facette ouverte de $\Cc$
associée (telle que 
$v\in \Aa$ est dans
$\Cc_I$ si et seulement si $\forall\varphi\in I,\ \varphi(v)=0$
et $\forall\varphi\in\Lambda-I,\ \varphi(v)>0$).
On note $\Ab_I$ le sous-tore de $\Ab$ formé par la composante neutre
de  $\cap_{\varphi\in I}\ker\varphi$.
\label{s- A_I agit par translations cocompactes dans Aa_I}
Le groupe  $A_I=\Ab_I(\KK) \cap G$ agit sur le plat standard $\Aa$ par des
translations de vecteurs formant un sous-groupe cocompact de $\Aa_I$
(cela découle par exemple de \cite{Margulis}[2.4.2]).
On notera $v=v^I+v_I$ la décomposition d'un vecteur $v$ de $\Aa$
suivant la somme directe orthogonale $\Aa=\Aa^I \oplus \Aa_I$.

\subsubsection{Suites $I$-fondamentales}
\label{def- suite I-fond dans A}%

Une suite $(v_i)_{i\in \NN}$ dans 
$\Cb$ sera dite {\em $I$-fonda\-men\-tale} si $v_i\in \Cc_I$ pour tout
$i$ et, pour toute racine $\varphi\notin I$, on a $\varphi(v_i)\tend
\pinfty$ .
Une suite $(a_i)_i$ dans $A$ est dite {\em $I$-fonda\-men\-tale} si la
suite des vecteurs de translation $v_i=\nu(a_i)$ des $a_i$ l'est (i.e.
pour tout $\varphi \in I$, $\abs{\varphi(a_i)}= 1$ et, pour tout
$\varphi \notin I$, $\abs{\varphi(a_i)} \geq 1$ pour tout $i$ et
$\abs{\varphi(a_i)}\tend\pinfty$).

\subsubsection{Immeuble sphérique et facettes à l'infini de $\Es$}
Le bord à l'infini $\bordinf \Es$ de $\Es$ est une réalisation
géo\-mé\-tri\-que de l'immeuble sphérique combinatoire de Tits $\Delta$ de
$\Gb$ (voir \cite[11.7]{RousseauEe04}, \cite[VI, 9E]{Brown}, \cite{ParImm}).
Les facettes $f$ de cet immeuble (qui sont les bords des facettes des
chambres de Weyl de $\Es$) sont en bijection avec les
$\KK$-sous-groupes paraboliques $\Pb_f$ de $\Gb$, et on a
\label{ss- carac alg des sous-groupes parab}
$\Stab_G(f)=\Pb_f(\KK)  \cap G$ (noté $P_f$) \cite[3.1]{SerreCR}.
Le stabilisateur $P_\alpha$ dans $G$ d'un point $\alpha$ de $\bordinf
\Es$ fixe point par point (l'adhérence de) la facette ouverte
$f(\alpha)$ de cet immeuble contenant $\alpha$, en particulier
$P_\alpha=P_{f(\alpha)}$.
On appelera {\em régularité} de $\alpha$ la dimension de la facette
$f(\alpha)$.
On dit qu'une facette $f$ {\em domine} une facette $f'$ si
l'adhérence de $f$ contient $f'$.
L'{\em étoile} $\Delta_f$ (ou {\em link}, ou {\em immeuble résiduel})
de $f$, est la réunion des facettes dominant $f$.
Deux points  $\alpha$ et $\beta$ de $\bordinf\Es$ sont opposés
(cf \no\ref{ss- def points oppposes et faisceaux CAT0})
si et seulement si  
les facettes correspondantes de l'immeuble sphérique à l'infini sont
opposées,
si et seulement si  
les $\KK$-sous-groupes paraboliques de $\Gb$ correspondants sont
opposés, c'est-à-dire que leur intersection
$\Gb_{\alpha\beta}=\Pb_\alpha\cap\Pb_\beta$ est un groupe réductif,
qui est alors un sous-groupe de Levi de chacun d'eux
\cite[3.1.5]{SerreCR}.
On a  $G_{\alpha\beta}=\Gb_{\alpha\beta}(\KK) \cap G$.
Le groupe $P_\alpha$ agit transitivement sur l'ensemble des points
opposés à $\alpha$.
\subsubsection{Projection sur un facteur de Levi et groupe $U_\alpha$}
\label{ss- projection sur un levi}
On note $\Ub_\alpha$ le radical unipotent du $\KK$-sous-groupe
parabolique $\Pb_\alpha$, et $U_\alpha=\Ub_\alpha(\KK)$
 qu'on appelera le radical unipotent de $P_\alpha$.
Si $\beta\in\bordinf \Es$ est un point opposé à $\alpha$, on a une
projection naturelle $p_{\alpha\beta}$ de $P_\alpha$ sur
$G_{\alpha\beta}$. C'est un morphisme de noyau $U_\alpha$, égal à
l'identité sur $G_{\alpha\beta}$, qui correspond la décomposition en
produit semi-direct $P_\alpha=G_{\alpha\beta}U_\alpha$.
\label{ss- changement de point opp}
Si $\beta'$ est un autre point opposé à $\alpha$ dans $\bordinf\Es$, alors il
existe $g$ dans $P_\alpha$ (qu'on peut supposer dans $U_\alpha$) tel
que $\beta'=g\beta$, et les projections associées sont alors
conjuguées (on a $p_{\alpha\beta'}=i_g\circ p_{\alpha\beta}$).

\subsubsection{Sous-groupes paraboliques standards}
Soit $I\subset\Lambda$ un sous-ensemble de racines simples.  
On note $c_I$ ou $\bordinf \Cc_I$ (resp. $c_I^-$) la facette (ouverte)
à l'infini de $\Cc_I$ (resp. de $-\Cc_I$).  
On note $P_I^+$ ou
$P_I$  le sous-groupe parabolique ({\em standard}) $P_{c_I}$,
et  $\Uplus_I$ ou $U_I$ son radical unipotent.
De même, on note $P_I^-=P_{c_I^-}$ et $\Umoins_I$ son radical unipotent,
et $G_I=P_I^+ \cap P_I^-$.
Le tore $A_I$ est central dans $G_I$.

\subsubsection{Structure des faisceaux}
\label{s- structure faisceaux}
Si $\alpha$ et $\beta$ sont deux points opposés de $\bordinf\Es$, il
existe $g\in G$ et un unique $I\subset\Lambda$ tel que $g\alpha\in
c_I$ et $g\beta \in c_I^-$. Alors $gP_\alpha g^{-1}=P_I$ et $g P_\beta
g^{-1}=P_I^-$ (et $gG_{\alpha\beta} g^{-1}=G_I$, etc.).
On note $\Aa_{\alpha\beta}=g^{-1}\Aa_I$.  On a alors une
décomposition naturelle en produit
$\Es_{\alpha\beta}=\Aa_{\alpha\beta}\times\Es^{\alpha\beta}$ (qui ne
dépend pas du choix de $g$). Le groupe $G_{\alpha\beta}$ préserve
cette décomposition et agit par translations sur $\Aa_{\alpha\beta}$.
Le bord à l'infini $S_{\alpha\beta}$ de $\Aa_{\alpha\beta}$ est la
sphère de Levi (cf \cite{SerreCR}) de $\bordinf\Es$ associée au
sous-groupe de Levi $\Gb_{\alpha\beta}$ de $\Pb_\alpha$, et
$f=f(\alpha)=g^{-1}c_I$ est un simplexe maximal de $S_{\alpha\beta}$.
L'étoile $\Delta_f$ de $f$ (union des facettes fermées contenant $f$)
est incluse dans $\bordinf\Es_{\alpha\beta}$.
Le bord à l'infini de $\Es^{\alpha\beta}$
s'identifie à (une réalisation géométrique de)
$\Delta_{f}$.

\subsubsection{Décomposition $\Umoins_I{}G_I \Uplus_I{}$}
Le résultat classique suivant signifie géo\-mé\-tri\-que\-ment que, pour deux
points du bord opposés $\alpha$ et $\beta$, l'application $n\mapsto
n\beta$ est un homéomorphisme de $U_\alpha$ sur l'ensemble des points
du bord opposés à $\alpha$, qui forment un ouvert dans leur orbite.
\begin{prop}
\label{prop- decomposition NRN}
Soit $I\subset \Lambda$. L'application
      $$\fonction{\varphi}{\Umoins_I{} \times{}G_I \times{}\Uplus_I{}}{G}
      {(\umoins{},r,\upluss{})}{\umoins{} r \upluss{}}$$
est un homéomorphisme sur son image $\calO_I$, qui est un ouvert
\cite[4.2]{Borel-Tits}.\cqfd
\end{prop}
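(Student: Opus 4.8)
Le plan est de déduire cet énoncé topologique sur les $\KK$-points de l'énoncé algébrique correspondant au niveau des variétés, à savoir la décomposition en \emph{grosse cellule} associée au sous-groupe parabolique $\Pb_I$ et à son opposé $\Pb_I^-$, puis de transférer le résultat à la topologie du corps local.

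Je commencerais par rappeler que, au niveau des groupes algébriques, le morphisme produit
\[
\Phi : \Ub_I^- \times \Gb_I \times \Ub_I^+ \fleche \Gb
\]
est un isomorphisme de $\KK$-variétés sur une sous-variété ouverte $\Omega_I$ de $\Gb$ définie sur $\KK$ : c'est exactement le contenu de \cite[4.2]{Borel-Tits}. En particulier, $\Phi$ et son inverse $\Phi^{-1}:\Omega_I\fleche \Ub_I^-\times\Gb_I\times\Ub_I^+$ sont tous deux des morphismes de $\KK$-variétés, et la décomposition d'un élément de $\Omega_I$ sous la forme $\umoins r \upluss$ est \emph{unique}. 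Je passerais ensuite aux $\KK$-points munis de la topologie du corps local : comme $\KK$ est localement compact et que les variétés en jeu sont lisses, un morphisme de $\KK$-variétés induit une application continue sur les $\KK$-points, et l'inclusion d'un ouvert de Zariski induit l'inclusion d'un ouvert pour la topologie du corps local. Ainsi $\Omega_I(\KK)$ est ouvert dans $\Gb(\KK)$, et l'application $\varphi$ de l'énoncé, restriction de $\Phi$ aux $\KK$-points (avec $\Umoins_I=\Ub_I^-(\KK)$, $\Uplus_I=\Ub_I^+(\KK)$ et $G_I=\Gb_I(\KK)\cap G$), est un homéomorphisme sur son image, d'inverse la restriction de $\Phi^{-1}$.

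Il resterait à identifier l'image $\calO_I=\Umoins_I{}G_I\Uplus_I$ et à vérifier son caractère ouvert dans $G$. Dans le cas non archimédien $G=\Gb(\KK)$, donc $\calO_I=\Omega_I(\KK)$ est directement ouvert. Dans le cas $\KK=\RR$, le groupe $G$ est un sous-groupe fermé de $\Gb(\RR)$ contenant la composante neutre $\Gb(\RR)^0$, donc ouvert dans $\Gb(\RR)$ ; comme les radicaux unipotents $\Ub_I^\pm(\RR)$ sont connexes (donc contenus dans $G$) et que $G_I\subset G$, l'unicité de la décomposition donne $\calO_I=\Omega_I(\RR)\cap G$, intersection de deux ouverts, donc ouverte.

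Le point délicat est le transfert de la topologie de Zariski à celle du corps local : le fait qu'une immersion ouverte de $\KK$-variétés lisses induise un plongement ouvert sur les $\KK$-points. Cela repose sur le théorème des fonctions implicites sur $\KK$, valable sur tout corps local (archimédien ou non). Une fois ce principe acquis, l'énoncé est formel, la continuité de $\varphi$ et de $\varphi^{-1}$ résultant de ce que $\Phi$ et $\Phi^{-1}$ sont des morphismes.
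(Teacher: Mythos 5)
Votre argument est correct et suit essentiellement la même voie que le texte, qui se contente d'invoquer \cite[4.2]{Borel-Tits} pour la décomposition en grosse cellule et laisse implicite le passage (standard) à la topologie du corps local. Les détails que vous ajoutez — le transfert aux $\KK$-points via le théorème des fonctions implicites, et le traitement du cas $\KK=\RR$ où $G$ peut être un sous-groupe ouvert propre de $\Gb(\RR)$ (en utilisant la connexité des $\Ub_I^{\pm}(\RR)$ et l'unicité de la décomposition pour obtenir $\calO_I=\Omega_I(\RR)\cap G$) — comblent exactement ce que le texte omet.
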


\subsubsection{Contraction par conjugaison}
Le fait classique suivant est fondamental pour l'étude de la topologie des
orbites de $G$ dans $\Rep=\Hom(\Ga,G)$. 

\begin{prop}
\label{prop- la projection est limite de conjugaisons}
Soit $I$ un sous-ensemble de racines simples.

\begin{enumerate}
\item 
La projection $p_I : P_I\fleche G_I$ est limite de conjugaisons : plus
pré\-ci\-sé\-ment, soit $a$ dans $A_I$ tel que le vecteur de
translation $\nu(a)$ de $\Aa$ associé soit dans $-\Cc_I$ 
(i.e. tel que $\forall \varphi\in  \Lambda-I,\ \abs{\varphi(a)}<1$), 
alors, pour tout
$g$ dans $P_I$, la suite $a^i g a^{-i}$ tend vers $p_I(g)$ quand l'entier $i$
tend vers l'infini. 

\item 
Soit $\rho$ dans $\Rep$ tel que $\rho(\Ga)$ est inclus dans $P_I$. La
représentation $\rho_I=p_I\circ\rho$ est dans l'adhérence de l'orbite
$A_I \cdot \rho$.
\end{enumerate}
\end{prop}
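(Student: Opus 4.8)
The plan is to prove (1) by a direct computation based on the semidirect decomposition $P_I=G_I\Uplus_I$ together with the fact (recalled just above) that $A_I$ is central in $G_I$, and then to obtain (2) immediately from (1), using that the topology of simple convergence on $\Rep$ is controlled by the finite generating set $S$. For (1), I would first write $g=ru$ with $r=p_I(g)\in G_I$ and $u\in\Uplus_I$, according to $P_I=G_I\Uplus_I$. Conjugating gives $a^igA^{-i}$ — more precisely $a^iga^{-i}=(a^ira^{-i})(a^iua^{-i})$, and since $a\in A_I$ is central in $G_I$ the first factor is simply $r$, so $a^iga^{-i}=r\,(a^iua^{-i})$. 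It therefore suffices to show that conjugation by $a^i$ contracts the unipotent radical to the identity, i.e. that $a^iua^{-i}\fleche 1$ as $i\tend\pinfty$ for every $u\in\Uplus_I$.

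The main point, and the step I expect to require the most care (especially so that it goes through uniformly over every local field, including positive characteristic), is this contraction estimate. I would use the root-group structure of the unipotent radical: $\Uplus_I$ is, as a $\KK$-variety and in a fixed order, the product of the root subgroups $U_\psi$ for the positive roots $\psi\in\Phi^+$ not lying in the subsystem generated by $I$, each $U_\psi$ being normalized by $A$ with $a\,u_\psi(x)\,a^{-1}=u_\psi(\psi(a)\,x)$. Writing such a root as $\psi=\sum_{\varphi\in\Lambda}n_\varphi\varphi$ with $n_\varphi\geq 0$ and $n_{\varphi_0}>0$ for some $\varphi_0\in\Lambda-I$, and using the relation $\chi(\nu(a))=\log\abs{\chi(a)}$, one gets $\log\abs{\psi(a)}=\psi(\nu(a))=\sum_{\varphi\in\Lambda-I}n_\varphi\,\varphi(\nu(a))<0$, because $\nu(a)\in-\Cc_I$ forces $\varphi(\nu(a))=0$ for $\varphi\in I$ and $\varphi(\nu(a))<0$ for $\varphi\in\Lambda-I$. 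Thus $\abs{\psi(a)}<1$ for every root $\psi$ occurring in $\Uplus_I$, so $a^iu_\psi(x)a^{-i}=u_\psi(\psi(a)^i x)\fleche 1$; expressing $u$ in the product coordinates and invoking continuity of multiplication then yields $a^iua^{-i}\fleche 1$, whence $a^iga^{-i}\fleche r=p_I(g)$, which proves (1).

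Finally, (2) follows formally. Since $a\in A_I$, each $a^i\cdot\rho=i_{a^i}\circ\rho$ lies in the orbit $A_I\cdot\rho$, and $\rho_I=p_I\circ\rho$ is again a homomorphism because $p_I$ is a morphism of groups. As $\Rep$ embeds $G$-equivariantly as a closed subset of $G^S$ via $\rho\mapsto(\rho(s))_{s\in S}$, convergence in $\Rep$ is nothing but convergence on the finite set $S$. For each $s\in S$ we have $\rho(s)\in P_I$, so part (1) gives $(a^i\cdot\rho)(s)=a^i\rho(s)a^{-i}\fleche p_I(\rho(s))=\rho_I(s)$. Hence $a^i\cdot\rho\fleche\rho_I$ in $\Rep$, which shows that $\rho_I\in\overline{A_I\cdot\rho}$, as claimed.
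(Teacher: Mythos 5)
Your overall strategy is exactly the paper's: decompose $g=ru$ according to $P_I=G_I\Uplus_I$, use that $a\in A_I$ is central in $G_I$ to reduce everything to showing that conjugation by $a^i$ contracts $\Uplus_I$ to the identity, and deduce (2) from (1) by applying it termwise (the paper simply says the second point follows immediately from the first). The key numerical estimate you isolate --- every root $\psi$ occurring in the unipotent radical is a non-negative integral combination of simple roots in which some $\varphi_0\in\Lambda-I$ appears, hence $\log\abs{\psi(a)}=\psi(\nu(a))<0$ and $\abs{\psi(a)}<1$ --- is also precisely the one the paper uses.

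The one step that is not quite right in the generality the proposition requires is the parametrization $a\,u_\psi(x)\,a^{-1}=u_\psi(\psi(a)\,x)$ with $x\in\KK$. This is valid when $\Gb$ is split, where the root groups are one-dimensional; but the statement is for an arbitrary connected reductive $\Gb$ over a local field, and $\Ab$ is only a maximal $\KK$-split torus. For non-split groups the relative root groups $\Ub_{(\psi)}$ can be higher-dimensional and non-commutative (they carry the weights $\psi$ and $2\psi$ when the relative root system is non-reduced, e.g.\ of type $BC_n$), and $A$ does not act on them by scalar multiplication of a single $\KK$-parameter. The paper sidesteps this by running your computation at the Lie algebra level (proposition \ref{prop- contraction par conjugaison forte}, applied to the $I$-fundamental sequence $a^{-i}$): it transports $U_I$ to $\squ_I$ by an $A$-equivariant homeomorphism $f:U_I\fleche\squ_I$ (the inverse of the exponential when $\KK=\RR$, and a general fact over local fields), decomposes $\squ_I=\oplus_{\varphi\in\Phi^+_I}\sqg_\varphi$ into $\Ad(A)$-weight spaces --- where the torus action genuinely is diagonal --- and concludes from $\abs{\varphi(a)}<1$ on each summand. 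So your estimate is correct and your architecture matches the paper's; to make the contraction step airtight for non-split groups you should replace the explicit root-group coordinates by this Lie-algebra (or equivalent) argument.
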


\begin{proof}
Un tel $a$ existe car $\nu(A_I)$ est  cocompact dans 
$\Aa_I$
 et  $\Cc_I$ est un cône convexe ouvert de $\Aa_I$.
Alors $a$ centralise $G_I$ et contracte $U_I$ (voir par exemple la
proposition \ref{prop- contraction par conjugaison forte} ci-dessous,
en notant que $a^{-i}$ est alors une suite $I$-fondamentale, pour une
preuve détaillée dans un cadre plus général ).
Le second point découle immédiatement du premier.
\end{proof}

La propriété de contraction plus forte suivante permet de comprendre
l'action (par conjugaison) des suites $I$-fondamentales sur les
ouverts $\Umoins_I{}G_I \Uplus_I{}$ de $G$ (voir proposition
\ref{prop- rec}). 
On note $g \cdot h = ghg^{-1}$
l'action par conjugaison de $G$ sur lui-même.

\begin{prop}
\label{prop- contraction par conjugaison forte}
Soit $(a_i)_{i\in\NN}$ une suite $I$-fondamentale  dans $A$.
Pour toute suite  $(\upluss_i)_i$
de $U_I$  (resp. $(\umoins_i)_i$ de
$U_I^{-}$) bornée, on a   $a_i^{-1}\cdot \upluss_i \tend 1$ (resp.
$a_i \cdot \umoins_i  \tend 1$) quand $i\fleche\pinfty$.
\end{prop}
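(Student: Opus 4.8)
The plan is to reduce the claim to a one‑character contraction computation carried out inside each relative root group of $\Uplus_I$ (resp. $\Umoins_I$), and then to reassemble via the direct‑product structure of the unipotent radical.

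First I would invoke the structure theory of reductive groups over $\KK$ (\cite{Borel-Tits}). Writing $\Phi_I$ for the subsystem of roots that are combinations of the simple roots in $I$, the unipotent radical $\Uplus_I$ is directly spanned by the relative root groups $U_\varphi=\Ub_\varphi(\KK)$ with $\varphi\in\Phi^+\setminus\Phi_I$: after fixing an ordering of these roots, the product map $\prod_\varphi U_\varphi\fleche\Uplus_I$ is a homeomorphism. The torus $A$ normalizes each $U_\varphi$, and conjugation acts there through the character $\varphi$ (and, when $2\varphi$ is also a root, through $2\varphi$ on the corresponding subgroup); in the split case this is the familiar $a^{-1}u_\varphi(x)a=u_\varphi(\varphi(a)^{-1}x)$, and in general the same scaling governs the action on the vector‑group subquotients of $U_\varphi$.

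Next I would read the contraction off the $I$-fundamental hypothesis. For every $\varphi\in\Phi^+\setminus\Phi_I$ the identity $\chi(\nu(a))=\log\abs{\chi(a)}$ gives $\log\abs{\varphi(a_i)}=\varphi(\nu(a_i))=\varphi(v_i)\tend\pinfty$, whence $\abs{\varphi(a_i)^{-1}}\tend 0$ (and $\abs{\varphi(a_i)^{-2}}\tend 0$). Consequently conjugation by $a_i^{-1}$ contracts $U_\varphi$ to the identity uniformly on bounded sets: if $x$ remains in a fixed compact subset, then $\varphi(a_i)^{-1}x\tend 0$ uniformly, so $a_i^{-1}\cdot u_\varphi(x)\tend 1$.

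Finally I would assemble the pieces. Because the product map is a homeomorphism, a bounded sequence $(\upluss_i)$ in $\Uplus_I$ has coordinates $\upluss_i=\prod_\varphi u_\varphi(x_{i,\varphi})$ with each $x_{i,\varphi}$ confined to a fixed compact set; since conjugation by $a_i^{-1}$ is a group automorphism it distributes over the product, giving $a_i^{-1}\cdot\upluss_i=\prod_\varphi\bigl(a_i^{-1}\cdot u_\varphi(x_{i,\varphi})\bigr)$, each factor tends to $1$ by the previous step, and continuity of the product map forces $a_i^{-1}\cdot\upluss_i\tend 1$. The assertion for $\Umoins_I$ is identical after replacing each $\varphi$ by $-\varphi$, so that the contracting element becomes $a_i$ and one uses $\abs{(-\varphi)(a_i)}=\abs{\varphi(a_i)}^{-1}\tend 0$. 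I expect the main obstacle to be the non‑split, possibly non‑reduced, case: one must check that conjugation by $a$ on the (in general non‑commutative) root group $U_\varphi$ is controlled entirely by $\varphi$ and $2\varphi$ on its vector‑group subquotients, so that $\abs{\varphi(a_i)}\tend\pinfty$ still yields uniform contraction on bounded sets. Once that scaling is secured through \cite{Borel-Tits}, the rest is pure continuity and boundedness.
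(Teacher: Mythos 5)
Your argument is correct, but it follows a genuinely different route from the paper. You decompose $\Uplus_I$ as a directly spanned product of relative root groups $U_{(\varphi)}$, $\varphi\in\Phi^+_I$, contract each factor separately using $\abs{\varphi(a_i)}\tend\pinfty$, and reassemble by continuity of the product map; the delicate point you correctly flag is that in the non-split, non-reduced case a root group $U_{(\varphi)}$ need not be a vector group, so one must track the action of $A$ on the filtration by $U_{(2\varphi)}$ (where the scaling is by $2\varphi(a)$, which also blows up, so the contraction survives). The paper avoids this entirely by linearizing in one stroke: it invokes an $A$-equivariant homeomorphism $f:U_I\fleche\squ_I$ onto the $\KK$-points of the Lie algebra of the unipotent radical (\cite[1.3.3]{Margulis}; the inverse of $\exp$ when $\KK=\RR$), transports the bounded sequence to $\squ_I=\oplus_{\varphi\in\Phi^+_I}\sqg_\varphi$, and reads off $\Ad(a_i^{-1})Z_i=\sum_\varphi\varphi(a_i)^{-1}Z_i^\varphi\tend 0$ directly from the relative weight-space decomposition. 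Your approach buys a more explicit, hands-on picture of the contraction root by root, at the price of the structure theory of root groups that you must import from \cite{Borel-Tits}; the paper's approach trades that for a single black-box equivariant chart, after which the estimate is pure linear algebra and handles reduced and non-reduced root systems uniformly. Both deductions of $\abs{\varphi(a_i)^{-1}}\tend 0$ from the $I$-fundamental hypothesis (via $\varphi$ being a nonnegative combination of simple roots with a positive coefficient on some element of $\Lambda-I$) agree.
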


\begin{proof}
Notons $\sqgb$ l'algèbre de Lie de $\Gb$ et $\sqg=\sqgb(\KK)$.
Notons $\squb_I$ l'algèbre de Lie de $\Ub_I$ et $\squ_I=\squb_I(\KK)$.
Il existe $f:U_I\fleche \squ_I$ un homéomorphisme  $A$-équivariant
(i.e. $f(a \cdot u)=\Ad(a) (f(u))$) \cite[1.3.3]{Margulis} (pour
$\KK=\RR$ c'est l'inverse de l'application exponentielle).
On note $\sqg_\varphi$ l'ensemble des $Z\in \sqg$ tels que
$\Ad(a)Z=\varphi(a)Z$ pour tout $a$ de $A$.
Alors
$\squ_I=\oplus_{\varphi\in\Phi^+_I}\sqg_\varphi$, où 
$\Phi^+_I$ est l'ensemble des racines positives qui ne sont pas
combinaison linéaire d'éléments de $I$.
Notons $Z_i=f(\upluss_i)$ et $Z_i=\sum_{\varphi\in\Phi^+_I}Z_i^\varphi$
la décomposition de $Z_i$ suivant la décomposition  
$\squ_I=\oplus_{\varphi\in\Phi^+_I}\sqg_\varphi$. 
Alors
$f(a_i^{-1}\cdot\upluss_i)=\Ad(a_i^{-1})Z_i= \sum_{\varphi\in\Phi^+_I}
\varphi(a_i)^{-1} Z_i^\varphi$. 
Or, pour tout $\varphi \in\Phi^+_I$, 
 la suite $(Z_i^\varphi)_i$ est bornée,
et $\abs{\varphi(a_i)^{-1}} \fleche 0$ 
(car $\varphi$ est combinaison linéaire positive d'éléments de $\Lambda-I$).
 On a donc que $f(a_i^{-1}\cdot\upluss_i)\tend 0$ dans $\squ_I$,
 et donc que  $a_i^{-1}\cdot\upluss_i\tend 1$ dans $U_I$.
\end{proof}

\section{Actions non-paraboliques en géométrie $\CAT0$}
\label{s- actions np sur espace CAT0}

Dans cette section, on se place dans le cadre général où 
$G$ est un groupe mé\-tri\-sable localement compact, dénombrable à
l'infini, agissant sur un espace $\Es$ métrique $\CAT0$
propre (voir section \ref{ss- notations
  CAT0} pour les notations et propriétés utilisées).

\subsection{Déplacement d'une action}
\label{ss- deplacement}

\begin{defi} 
Soit $\rho:\Ga\longrightarrow G$ une action de $\Ga$ sur $\Es$.
On appelle {\em fonction de déplacement} de $\rho$ (relativement à la
partie gé\-né\-ra\-tri\-ce $S$) et on note $\dro$ la fonction convexe
continue de $\Es$ dans $[0,\pinfty[$
$$ \dro : x  \mapsto  \sqrt{\sumS d(x,\rho(s)x)^2}.$$ 
On appelle {\em minimum de déplacement} de $\rho$ (relativement à la
partie généra\-tri\-ce $S$) et on note $\laro$ la borne inférieure de
$\dro$ sur $\Es$, et {\em ensemble de dé\-pla\-ce\-ment minimal} de
$\rho$ (relativement à la partie génératrice $S$) le convexe fermé
$\Minro$, éven\-tu\-el\-le\-ment vide, formé des points de $\Es$ où
$\dro$ atteint son minimum.
\end{defi}

\begin{remas*}
On peut voir $\dro$, $\laro$ et $\Minro$ comme une généralisation 
des notions analogues
classiques pour une isométrie individuelle $g\in \Isom(\Es)$ \cite[II,6.1]{BrHa}
(et ces notions coïncident si $S$ est réduite à un seul élément $s$
et $g=\rho(s)$).
Attention (si $S$ n'est pas réduite à un élément),  $\Minro$ n'est
a priori pas stable par $\rho(\Gamma)$.
Le centralisateur $Z(\rho)$ de $\rho$ dans $G$ préserve $\Minro$.
\label{s- semicontinuite de laro}
On notera que la fonction $\la:\Rep\fleche\RR^+$ est semicontinue 
(supérieurement) : si $\roi\tend\rho$, alors $\limsup\laroi\leq \laro$.
La définition donnée ici diffère légèrement de celle de
\cite{ParThese} (où $\dro$ est défini par $\dro(x)=\maxS d(x,\rho(s)x)$).
Cela ne change pas les propriétés qu'on y utilisait, car les deux
versions sont des fonctions convexes continues équivalentes. Celle-ci
est meilleure, notamment car elle passe bien aux produits (voir
ci-dessous), et permet 
de prouver, dans les espaces symétriques, 
 que ``$\Minro$ non vide'' entraîne la complète réductibilité
(proposition \ref{prop- carac ss dans  ES}).
\end{remas*}

{%
Dans le cas où l'espace $\Es$ est euclidien et le groupe $G$ agit par
translations, la fonction de déplacement $\dro$ est constante (égale à
$\laro$) et $\Minro$ est l'espace $\Es$ tout entier.
}
Dans le cas où $\rho$ préserve un convexe fermé $Y$, 
comme la projection sur $Y$ diminue $\dro$, on a $\la(\restrict{\rho}{Y}) =
\laro$ et $\Min(\restrict{\rho}{Y})=\Minro\cap Y$.
Dans le cas où $\rho$ préserve une décomposition de $\Es$ en produit
$\Es=\Es_1\times{}\Es_2$, on note $\rho=(\rho_1,\rho_2)$, et
on a  $\dro^2=d_{\rho_1}^2 + d_{\rho_2}^2$, d'où
$\laro^2=\la(\rho_1)^2 + \la(\rho_2)^2$ 
et $\Minro=\Min(\rho_1)\times{}\Min(\rho_2)$.

\subsection{Actions \nps{}}
\label{ss- Actions np}
Les actions (de $\Ga$ sur $\Es$) ne possédant pas de point fixe global
dans le bord à l'infini $\bordinf \Es$ de $\Es$ ont des propriétés
remarquables, que nous allons voir maintenant. Nous introduisons
maintenant la notion de représentation {\em \np{}}, qui est juste une
variante plus pratique de cette notion destinée à couvrir le cas où le
$G$-espace $\Es$ possède un facteur translaté non trivial (maximal) $\Es_0$
(cf \ref{sss- facteur translate}). On note alors $\Es=\Es_0\times{}\Es'$.

\begin{defi} 
\label{def- action np} 
On dit qu'une action $\rho$ de $\Ga$ sur $\Es$ est {\em \np{}}
si $\rho$ n'a 
pas de point fixe global dans $\bordinf \Es -
\bordinf \Es_0$ (ou, de manière équivalente, dans $\bordinf \Es'$).
\end{defi}
Remarquons qu'une représentation est \np{} si et seulement si les
éléments de la partie génératrice $S$ n'ont pas de point fixe commun
dans $\bordinf \Es'$.

\subsubsection{Liens avec l'irréductibilité 
et la stabilité pour les groupes réductifs.}
\label{s- liens entre np et irreducibilite et stabilite dans les GR}
Dans le cas où $G$ est $\GLnR$ agissant sur son espace symétrique
$\Es$ associé, une re\-pré\-sen\-ta\-tion est \np{} si et seulement si
l'action linéaire sur $\RR^n$ associée est ir\-ré\-duc\-ti\-ble.
Dans le cas où $G$ est un groupe réductif sur un corps local $\KK$,
agissant sur son espace associé $\Es$ (espace symétrique ou immeuble
de Bruhat-Tits) (cf. hypothèses et notations de la section \ref{s- hyp
  et notations groupes reductifs}),
une représentation $\rho$ est \np{} si et seulement si elle est
$\Gb$-{\em irréductible} au sens de \cite[3.2.1]{SerreCR},
c'est-à-dire que son image $\rho(\Ga)$ n'est incluse dans aucun
$\KK$-sous-groupe parabolique propre de $\Gb$ 
(en particulier les représentations (d'images) \zdenses{}
sont \nps{}).
Cette notion est donc légèrement plus générale  que la notion de
repré\-sen\-ta\-tion (ou $S$-uplet) {\em stable} de la théorie
géométrique des invariants (i.e. $\Gb\cdot\rho$ fermé de Zariski et
$Z_\Gb(\rho)/Z(\Gb)$ fini),
qui équivaut  à la condition que $\rho(\Ga)$ n'est inclus dans
aucun sous-groupe parabolique (pas nécessairement défini sur $\KK$) de
$\Gb$ \cite[4.1 et 16.7]{Richardson} (voir aussi par exemple
\cite{JoMi}).
\subsubsection{Caractérisation via l'ensemble minimal}
\label{s- dro decroit vers les pf a l'infini}
Si $\alpha$ est un point fixe à l'infini de $\rho$, alors la fonction
de déplacement $\dro$ est décroissante sur tout rayon géodésique
allant vers $\alpha$ (par convexité).
Cela donne la caractérisation fondamentale suivante.
\begin{prop} \label{prop- spfi ssi minro compact non vide}
Soit $\rho$ une action de $\Ga$ sur $\Es$. Alors sont équivalents :
\begin{enumerate}
\item L'action $\rho$ n'a pas de point fixe à l'infini.  

\item La fonction de déplacement $\dro:\Es\fleche\RR^+$ est propre.

\item L'ensemble minimal $\Minro$ est  compact non vide.  

\end{enumerate}
En particulier, si $G$ agit proprement sur $\Es$, le centralisateur
$Z(\rho)$ de $\rho$ dans $G$ est alors compact.
\cqfd 
\end{prop}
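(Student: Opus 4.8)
My plan is to prove the three conditions equivalent cyclically, $(1)\Rightarrow(2)\Rightarrow(3)\Rightarrow(1)$, using only the convexity and continuity of $\dro$ together with the fact that, $\Es$ being proper, the compactified space $\Es\cup\bordinf\Es$ is compact for the cone topology and every isometry of $\Es$ extends to a homeomorphism of it (cf. \cite{BrHa}). For $(2)\Rightarrow(3)$ I would argue that properness of $\dro$ makes every sublevel set $\{x\in\Es:\dro(x)\leq c\}$ compact; taking $c>\laro$ yields a non-empty compact set on which the continuous function $\dro$ attains its infimum, so $\Minro\neq\es$, and since $\Minro=\{\dro\leq\laro\}$ is itself a sublevel set it is compact.

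For $(3)\Rightarrow(1)$ I would assume $\Minro$ compact and non-empty and suppose, for contradiction, that $\rho$ fixes some $\alpha\in\bordinf\Es$. Choosing $x\in\Minro$ and the geodesic ray $r$ from $x$ to $\alpha$, the observation recalled just before the statement shows that $\dro$ is non-increasing along $r$; as $\dro(x)=\laro$ already realises the global minimum and $\dro\geq\laro$ everywhere, $\dro$ must be constant equal to $\laro$ along $r$. Hence the entire unbounded ray $r$ lies in $\Minro$, contradicting its compactness; therefore $\Fixinfro=\es$.

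The implication $(1)\Rightarrow(2)$ is, I expect, the \emph{main obstacle}, and I would treat it by contraposition. If $\dro$ fails to be proper, then some sublevel set $\{\dro\leq M\}$ is non-compact; being closed by continuity of $\dro$, it is unbounded in the proper space $\Es$, so there is a sequence $(x_n)$ with $d(\xo,x_n)\to\pinfty$ and $\dro(x_n)\leq M$. For each $s\in S$ one has $d(x_n,\rho(s)x_n)\leq\dro(x_n)\leq M$, so $\rho(s)$ displaces $x_n$ by a bounded amount. Extracting so that $x_n\to\xi\in\bordinf\Es$ in the compact space $\Es\cup\bordinf\Es$, the crux is the standard $\CAT0$ fact that an isometry moving a sequence converging to a boundary point a bounded distance fixes that boundary point. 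I would prove this step by the convexity/comparison estimate controlling the radial projections of $(x_n)$ and $(\rho(s)x_n)$ onto a fixed sphere about $\xo$: this forces $\rho(s)x_n\to\xi$, while continuity of the boundary extension gives $\rho(s)x_n\to\rho(s)\xi$, whence $\rho(s)\xi=\xi$ for all $s\in S$ and thus $\rho(\Ga)\xi=\xi$. This fixed point at infinity contradicts $(1)$.

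Finally, for the last assertion, I would assume $G$ acts properly and that the equivalent conditions hold, so $\Minro$ is compact and non-empty. Each $g\in Z(\rho)$ commutes with $\rho(\Ga)$, so $\dro$ is $Z(\rho)$-invariant and $Z(\rho)$ preserves $\Minro$; in particular $g\Minro\cap\Minro=\Minro\neq\es$ for every such $g$, giving $Z(\rho)\subseteq\{g\in G:g\Minro\cap\Minro\neq\es\}$. The latter set is compact by properness of the action (with $K=L=\Minro$), and since the centralizer $Z(\rho)$ is closed in $G$, it is compact.
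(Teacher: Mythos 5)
Your proof is correct and follows essentially the route the paper intends: the paper states this proposition without a written proof, deriving it from the preceding observation that $\dro$ is non-increasing along any ray toward a fixed point at infinity, which is exactly your $(3)\Rightarrow(1)$ step, while your $(1)\Rightarrow(2)$ step rests on the same bounded-displacement fact that the paper isolates as Lemma~\ref{lemm- existence de points fixes au bord}. The remaining steps (compact sublevel sets and properness of the $G$-action applied to $K=L=\Minro$) are the standard completions and are carried out correctly.
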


\begin{rema}
\label{rem- np implique Minro non vide}
On a que $\rho$ est \np{} si et seulement si l'action $\rho'$ sur
$\Es'$ 
induite est sans points fixes à l'infini, si et seulement si
$\Min(\rho')$ est compact non vide. Alors en particulier
$\Minro=\Es_0\times \Min(\rho')$ est non vide, 
et, si $G$ agit proprement sur $\Es$ et $Z(G)$ agit cocompactement sur
$\Es_0$, on a que $Z(\rho)/Z(G)$ est compact.
\end{rema}

\subsection{Action de $G$ sur l'espace des actions \nps{}}
On note maintenant $\Rspfi(\Ga, G)$ ou $\Rspfi$ le sous-espace de
$\Rep$ formé par les représentations de $\Ga$ dans $G$ sans point fixe
à l'infini, c'est-à-dire n'ayant pas de point fixe global
dans $\bordinf \Es$.
On note $\Rnp(\Ga, G)$ ou $\Rnp$ le sous-espace de $\Rep$ formé par
les représentations \nps{} de $\Ga$ dans $G$. 
La propriété de base suivante
permet de relier la ``non propreté'' de l'action de $G$  au
voisinage de $\rho\in\Rep$ avec les points fixes à l'infini de
$\rho$.

\begin{lemm}
\label{lemm- existence de points fixes au bord}
On considère une suite $(\rho_i)_i$ convergeant vers $\rho$ dans $\Rep$, et
 une suite $(g_i)_i$ dans $G$ telle que la suite des conjugués $\rho'_i
= g_i \cdot \rho_i$ reste dans un compact de $\Rep$.
 Si pour un (tout) point $x$ de $\Es$, la suite des points $g_i^{-1}x$ tend
vers un point $\alpha$ dans le bord à l'infini de $\Es$, alors $\rho$
fixe $\alpha$.
\end{lemm}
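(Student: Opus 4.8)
The plan is to reduce everything to the generators: since $P_\alpha=\Stab_G(\alpha)$ is a subgroup of $G$ and $S$ engendre $\Ga$, it suffices to show that $\rho(s)\in P_\alpha$ for each $s\in S$, which then forces $\rho(\Ga)\subset P_\alpha$, i.e. $\rho$ fixe $\alpha$. So fix the basepoint $x$ of the hypothesis and let $r:[0,\pinfty[\fleche\Es$ be the geodesic ray from $x$ to $\alpha$. The goal becomes: for each $s$, the displacement $d(r(t),\rho(s)r(t))$ stays bounded in $t$; for then $\rho(s)r$ is a ray asymptotic to $r$ and $\rho(s)\alpha=\alpha$.

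First I would get a uniform control of the displacement of $y_i:=g_i^{-1}x$ by $\roi(s)$. Since $g_i$ is an isometry and $\rho'_i(s)=g_i\,\roi(s)\,g_i^{-1}$, one has $d(\roi(s)y_i,y_i)=d(\rho'_i(s)x,x)$; as $(\rho'_i)_i$ reste dans un compact de $\Rep$, evaluation at $s$ and continuity of the action make $\{\rho'_i(s)x\}$ relativement compact dans $\Es$, so $d(\roi(s)y_i,y_i)\leq M$ for some $M$ independent of $i$. At the other end, $d(x,\roi(s)x)\fleche d(x,\rho(s)x)$ since $\roi(s)\fleche\rho(s)$, so this too is uniformly bounded.

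The core step exploits the convexity of $d$ in the $\CAT0$ space $\Es$. Write $r_i$ for the geodesic segment from $x$ to $y_i$ and $\roi(s)r_i$ for its image, both parametrized by arc length on $[0,L_i]$ with $L_i=d(x,y_i)$; they have the same length because $\roi(s)$ is an isometry. Convexity of $u\mapsto d(r_i(u),\roi(s)r_i(u))$ bounds it by the maximum of its two endpoint values $d(x,\roi(s)x)$ and $d(y_i,\roi(s)y_i)$, hence by a constant $M'$ independent of $i$ and of $u$. Now fix $t\geq 0$. Since $y_i\tend\alpha$ we have $L_i\tend\pinfty$ and, by convergence of geodesics in a proper $\CAT0$ space, $r_i(t)\tend r(t)$; by joint continuity of the action together with $\roi(s)\fleche\rho(s)$, also $\roi(s)r_i(t)\tend\rho(s)r(t)$. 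Passing to the limit $i\fleche\pinfty$ in the inequality $d(r_i(t),\roi(s)r_i(t))\leq M'$ gives $d(r(t),\rho(s)r(t))\leq M'$ for every $t$, which is exactly what was wanted.

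The main obstacle — and the reason this is not a one-line continuity argument — is precisely the phenomenon recalled in the introduction: the boundary action is \emph{not} continuous under $\roi\fleche\rho$ (a sequence of isometries tending to the identity can move points at infinity arbitrarily), so one may not simply pass $\roi(s)y_i\tend\rho(s)\alpha$. The convexity estimate is what circumvents this: it transfers the two finite, uniformly bounded displacements at the endpoints $x$ and $y_i$ into a uniform bound at \emph{every} finite parameter $t$, after which the limit in $i$ is taken only at finite scale, where the action on $\Es$ is genuinely continuous.
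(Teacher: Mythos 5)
Your proof is correct, and its decisive step is exactly the paper's: the identity $d(y_i,\rho_i(s)y_i)=d(x,\rho'_i(s)x)$ for $y_i=g_i^{-1}x$, which converts the compactness hypothesis on $(\rho'_i)_i$ into a uniform bound on the displacement of $y_i$. You differ only in how the limit is then taken. The paper observes that $\rho_i(\gamma)y_i$ stays at bounded distance from $y_i$, hence tends to $\alpha$, while it also tends to $\rho(\gamma)\alpha$ by continuity of the extended action on $\ov\Es=\Es\cup\bordinf\Es$; you instead propagate the two endpoint bounds along the segments $[x,y_i]$ by convexity and pass to the limit only at finite parameters $t$, which keeps every limit inside $\Es$. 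That is a perfectly valid, self-contained variant. One correction, however: the ``obstacle'' you invoke to justify the detour is not real. For a proper $\CAT0$ space, the action of $\Isom(\Es)$ (with the topology of uniform convergence on compacts --- the relevant one here, since $\rho_i(s)\tend\rho(s)$ in the locally compact group $G$ acting continuously on $\Es$) on $\ov\Es$ with the cone topology is jointly continuous, so one may pass directly from $y_i\tend\alpha$ and $\rho_i(s)\tend\rho(s)$ to $\rho_i(s)y_i\tend\rho(s)\alpha$; this is precisely what the paper does. The Euclidean example of the introduction (rotations of small angle) does not contradict this: those isometries converge uniformly on compacts to a translation and their boundary maps converge accordingly; what fails there is the closedness of conjugacy orbits and the continuity of fixed-point sets at infinity, not the continuity of the action on the boundary. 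In short, your convexity argument is a correct, more hands-on substitute for a continuity statement that is in fact true.
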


\begin{proof}
Pour $\ga $ dans $\Ga$ fixé, le  déplacement par 
$\rho_i(\ga)$ du point $x_i=g_i^{-1}x$ vaut $d(x_i,\ \rho_i(\ga) x_i) 
= d( x,\ \rho'_i(\ga) x)$, donc  est borné.
Par conséquent $\rho_i(\ga) x_i $ tend vers $ \alpha$.
Comme  $\rho_i(\ga)x_i$ tend aussi vers $ \rho(\ga)\alpha$,
on a que $\rho(\ga)\alpha = \alpha$.
\end{proof}

Le résultat suivant s'en déduit directement.

\begin{prop} \label{prop- Rspfi ouvert, lambda continue, Xspfi lc} 
\begin{enumerate}

\item L'espace $\Rspfi$ est un ouvert de $\Rep$.

\item Si $G$ agit proprement sur $\Es$, alors $G$ agit proprement sur $\Rspfi$.

\end{enumerate}
\end{prop}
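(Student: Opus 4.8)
The plan is to treat the two assertions separately, deriving the first from the compactness of $\bordinf\Es$ and the second from Lemma \ref{lemm- existence de points fixes au bord}.

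For (1), I would show that the complement $\Rep\setminus\Rspfi$, consisting of the representations fixing at least one point of $\bordinf\Es$, is closed. Let $(\rho_i)_i$ be a sequence in this complement converging to $\rho\in\Rep$, and for each $i$ choose $\alpha_i\in\bordinf\Es$ fixed by $\rho_i$. Since $\Es$ is proper, the compactification $\ov\Es=\Es\cup\bordinf\Es$ for the cone topology is compact, so after extraction $\alpha_i\to\alpha\in\bordinf\Es$. For each $\ga\in\Ga$ we have $\rho_i(\ga)\tend\rho(\ga)$ in $G$ and $\alpha_i\tend\alpha$; the joint continuity of the action of $G$ on $\ov\Es$ (for the cone topology) then gives $\rho_i(\ga)\alpha_i\tend\rho(\ga)\alpha$. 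As $\rho_i(\ga)\alpha_i=\alpha_i\tend\alpha$, we obtain $\rho(\ga)\alpha=\alpha$ for every $\ga$, so $\rho$ fixes $\alpha$ and lies in the complement. Hence $\Rspfi$ is open.

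For (2), assume $G$ acts properly on $\Es$. Since $\Rep$ is metrizable and $\sigma$-compact, it suffices to prove the following sequential form of properness: given $\rho_i\tend\rho$ and $\rho'_i=g_i\cdot\rho_i\tend\rho'$ with $\rho,\rho'\in\Rspfi$, the sequence $(g_i)_i$ admits a convergent subsequence in $G$ (closedness of $\{g:\,gK\cap L\neq\es\}$ being automatic from continuity and compactness of $K,L$). Fix a base point $x\in\Es$ and consider $(g_i^{-1}x)$ in the compact space $\ov\Es$; after extraction it converges to some $\xi\in\ov\Es$, and I would split into two cases according to whether $\xi$ is finite or at infinity. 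If $\xi\in\Es$, then $\{g_i^{-1}x\}\cup\{\xi\}$ is a compact $L'\subset\Es$, and applying the properness of the $G$-action on $\Es$ to the compacts $\{x\}$ and $L'$ shows that $(g_i^{-1})$, hence $(g_i)$, lies in a compact of $G$ and so has a convergent subsequence, as desired. If instead $\xi=\alpha\in\bordinf\Es$, then the hypotheses of Lemma \ref{lemm- existence de points fixes au bord} are met — $\rho_i\tend\rho$, the conjugates $\rho'_i=g_i\cdot\rho_i$ remain in the compact $\{\rho'_i\}\cup\{\rho'\}$, and $g_i^{-1}x\tend\alpha$ — whence $\rho$ fixes $\alpha$, contradicting $\rho\in\Rspfi$. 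So this case cannot occur, and $(g_i)$ always has a convergent subsequence, which yields properness.

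The argument is essentially a direct packaging of the Lemma, so there is no serious analytic obstacle; the only points requiring care are the joint continuity of the boundary action used in (1) and the clean reduction of properness to its sequential form in (2), together with the finite/infinite dichotomy for $(g_i^{-1}x)$ in $\ov\Es$ that routes the boundary case into the Lemma.
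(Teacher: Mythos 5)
Your proposal is correct and follows essentially the same route as the paper: part (1) is the compactness of $\bordinf\Es$ plus continuity of the action ("a limit of fixed points is a fixed point for the limit action"), and part (2) reduces properness to its sequential form, sends $g_i^{-1}\xo$ to the boundary after extraction, and invokes Lemma \ref{lemm- existence de points fixes au bord} to contradict $\rho\in\Rspfi$. The only cosmetic difference is that you organize the boundary/interior dichotomy for $g_i^{-1}x$ explicitly where the paper argues directly by contradiction from the non-existence of an accumulation point; the substance is identical.
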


\begin{proof}
La première assertion est claire par compacité du bord à l'infini de
$\Es$
 et continuité de l'action de $G$ (une limite de points fixes est
un point fixe pour l'action limite).
Montrons que $G$ agit proprement sur $\Rspfi$.
Commme $G$ et $\Rep$ sont métrisables, il suffit de voir que, si on a
une suite $(\rho_i)_i$ qui tend vers $\rho$ dans $\Rspfi$ et une suite
$(g_i)_i$ dans $G$ telle que $g_i \cdot \rho_i$ tend vers $\rho'$ dans
$\Rspfi$, alors la suite $(g_i)_i$ possède une valeur d'adhérence dans
$G$. Or, dans le cas contraire, pour $\xo\in\Es$, la suite
$g_i^{-1}\xo$ n'est pas bornée dans $\Es$ (car $G$ agit proprement sur
$\Es$), donc, quitte à extraire, $g_i^{-1}\xo$ tend vers $\alpha$ dans
le bord de $\Es$,
qui est fixé par $\rho$ par le lemme \ref{lemm- existence de points
  fixes au bord}.  C'est impossible car $\rho$ n'a pas de point fixe à
l'infini.
\end{proof}

Le résultat analogue suivant permet de couvrir le cas où le $G$-espace
$\Es$ possède un facteur translaté non trivial (l'espace $\Rspfi$ est
alors vide), en particulier il s'applique au cas des groupes réductifs
non semisimples (cf. section \ref{s- hyp et notations groupes
  reductifs}).

\begin{coro} \label{coro- Rnp ouvert, Xnp lc} 
On suppose que le centre $Z$ de $G$ agit trivialement sur le facteur
$\Es'$.
On note $G'=G/Z$, qui agit sur $\Es'$.
 On suppose que l'action de $G'$ sur $\Es'$ est propre.
\begin{enumerate}
\item L'espace $\Rnp$ est un ouvert de $\Rep$. 

\item Le groupe $G'$ agit proprement sur $\Rnp$.
\end{enumerate}

En particulier $\Rnp/G$ est séparé et localement compact.
\end{coro}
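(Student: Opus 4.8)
Le plan est de se ramener à la proposition~\ref{prop- Rspfi ouvert, lambda continue, Xspfi lc}, appliquée non pas à $G$ agissant sur $\Es$, mais au groupe quotient $G'=G/Z$ agissant sur le facteur $\Es'$. Comme $Z$ agit trivialement sur $\Es'$, l'action de $G$ sur $\Es'$ se factorise à travers $G'$ ; de même, $Z$ étant central, l'action par conjugaison de $G$ sur $\Rep$ se factorise à travers $G'$. La projection $\pi:G\fleche G'$ induit alors une application continue $\pi_*:\Rep(\Ga,G)\fleche\Rep(\Ga,G')$, $\rho\mapsto\pi\circ\rho$, qui est $G'$-équivariante (on a $\pi_*(g\cdot\rho)=\pi(g)\cdot\pi_*(\rho)$). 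Par la définition~\ref{def- action np} et la remarque~\ref{rem- np implique Minro non vide}, une représentation $\rho$ est \np{} si et seulement si l'action induite $\pi_*(\rho)$ sur $\Es'$ n'a pas de point fixe à l'infini, autrement dit $\Rnp=\pi_*^{-1}(\Rspfi(\Ga,G'))$.

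Il faut d'abord vérifier que le cadre de la section~\ref{ss- notations CAT0} est satisfait par $G'$ agissant sur $\Es'$ : l'espace $\Es'$, facteur d'un espace $\CAT0$ propre, est lui-même $\CAT0$ propre ; le groupe $G'$, quotient de $G$ par le sous-groupe fermé central $Z$, reste métrisable, localement compact et dénombrable à l'infini ; et on suppose son action sur $\Es'$ propre. On peut donc appliquer la proposition~\ref{prop- Rspfi ouvert, lambda continue, Xspfi lc} à $G'$ et $\Es'$ : l'espace $\Rspfi(\Ga,G')$ est ouvert dans $\Rep(\Ga,G')$ et $G'$ y agit proprement. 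La première assertion en résulte aussitôt : $\pi_*$ étant continue, $\Rnp=\pi_*^{-1}(\Rspfi(\Ga,G'))$ est ouvert dans $\Rep$.

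Pour la seconde assertion, le point est de transférer la propreté le long de l'application $G'$-équivariante continue $\pi_*:\Rnp\fleche\Rspfi(\Ga,G')$. De façon générale, si $f:X\fleche Y$ est continue et $G'$-équivariante entre espaces localement compacts séparés et si $G'$ agit proprement sur $Y$, alors $G'$ agit proprement sur $X$ : pour un compact $K$ de $X\times X$, les images par $f$ de ses deux projections fournissent un compact $L$ de $Y\times Y$, et l'équivariance entraîne que tout couple $(g',x)$ vérifiant $(x,g'x)\in K$ satisfait $(f(x),g'f(x))\in L$ ; la propreté sur $Y$ confine alors $g'$ dans un compact de $G'$, de sorte que la préimage de $K$, fermée dans un produit compact, est compacte. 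Appliqué à $\pi_*$ (en notant que $\Rnp$ est localement compact séparé comme ouvert de $\Rep$), ceci montre que $G'$ agit proprement sur $\Rnp$. C'est l'étape qui demande le plus de soin, bien qu'elle reste de nature formelle ; on pourrait d'ailleurs l'éviter en reprenant directement l'argument de la proposition~\ref{prop- Rspfi ouvert, lambda continue, Xspfi lc} via le lemme~\ref{lemm- existence de points fixes au bord}, les points fixes à l'infini étant cette fois cherchés dans $\bordinf\Es'$ et la non-bornitude étant fournie par la propreté de l'action de $G'$ sur $\Es'$.

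Enfin, l'assertion finale résulte de propriétés standard des actions propres : comme $Z$ agit trivialement par conjugaison, on a $\Rnp/G=\Rnp/G'$ ; or le quotient d'un espace localement compact séparé par une action propre d'un groupe localement compact est séparé, et, la projection étant de surcroît ouverte, le quotient $\Rnp/G$ est séparé et localement compact.
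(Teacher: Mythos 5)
Votre démonstration est correcte et suit essentiellement la même voie que celle du papier : on factorise par $G'=G/Z$, on identifie $\Rnp(\Ga,G)$ à l'image réciproque de $\Rspfi(\Ga,G')$ par l'application continue $G'$-équivariante induite, puis on en tire l'ouverture et la propreté par la proposition~\ref{prop- Rspfi ouvert, lambda continue, Xspfi lc}. La seule différence est cosmétique : vous redémontrez à la main le transfert de propreté le long d'une application équivariante continue, là où le papier cite directement \cite[III.29, prop. 5]{BouTG}.
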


\begin{remas*}
1. Si l'action de $G$ sur $\Es$ est cocompacte, minimale (sans
sous-espace convexe strict stable) (ce qui est automatique si les
géodésiques de $\Es$ sont extensibles \cite[II,6.20]{BrHa}), alors $Z$
agit trivialement sur $\Es'$ (car $Z$ agit par translations de
Clifford sur $\Es$ \cite[II,6.16]{BrHa}, de directions fixées par $G$,
i.e. dans $\Es_0$).
Si de plus l'action de $G$ sur $\Es$ est propre, et $Z$ agit cocompactement sur
$\Es_0$, alors $G/Z$ agit proprement sur $\Es'$.
Les hypothèses sont donc vérifiées dans le cas des groupes réductifs
agissant sur leur espace associé (cf. section \ref{s- hyp et notations
  groupes reductifs}).

2. Ce résultat généralise donc le résultat suivant de \cite[proposition
  11.11]{Richardson} :
lorsque  $G$ est un groupe de Lie ré\-duc\-tif ré\-el, 
l'action de $G/Z$ sur le sous-espace $(G^n)^s$ des $n$-uplets stables
est propre.

3.
En particulier, sous les hypothèses ci-dessus, on a donc que si $\rho$
est \np{}, alors $G\cdot\rho$ est fermé dans $\Rep$ et $Z(\rho)/Z(G)$
est compact. 
Dans les groupes réductifs on peut montrer que cette propriété
caractérise en fait les représentations \nps{} (en utilisant le
théorème \ref{thm- semisimplification et PGQS} et le corollaire
\ref{coro- Serre-cr implique np dans Y faisceau}) 
(à comparer avec la notion de repré\-sen\-ta\-tion stable, où compact
est remplacé par fini, cf \ref{s- liens entre np et irreducibilite et
  stabilite dans les GR}).

\end{remas*}

\begin{proof}
L'action de $Z$ sur $\Rep(\Ga,G)$ est triviale, donc l'action de $G$
passe au quotient en une action continue de $G'$ sur $\Rep(\Ga,G)$. 
On note $\mu$ la projection canonique de $G$ sur $G'$, et $\mu^*$
l'application continue de $\Rep(\Ga,G)$ dans $\Rep(\Ga,G')$ induite, qui
est $G'$-équivariante.
Par la proposition \ref{prop- Rspfi ouvert, lambda continue, Xspfi
  lc}, on a que $\Rspfi(\Ga, G')$ est un ouvert, donc $\Rnp(\Ga,
G)=(\mu^*)^{-1}(\Rspfi(\Ga, G'))$
est aussi un ouvert. De plus $G'$ agit proprement sur $\Rspfi(\Ga,
G')$, donc aussi sur $\Rnp(\Ga, G)$ \cite[III.29, prop. 5]{BouTG}.
\end{proof}

\section{Représentations \creds}
\label{s- reps cr}

 Dans cette section, on introduit la notion de représentation \cred{}
 et on étudie des propriétés géométriques (i.e. de l'action sur
 l'espace métrique $\CAT0$), naturellement reliées, dont on montre
 qu'elles sont en fait équivalentes dans le cas des espaces
 symétriques (prop. \ref{prop- carac ss dans ES}).

\subsection{Dans le cadre général des espaces métriques $\CAT0$}
\label{ss- reps cr CAT0}

 \begin{defi}
\label{def- ss : Serre-cr}
On dira qu'une  représentation $\rho$ (de $\Ga$ dans $G$) est
 {\em \cred}  (en abrégé, cr) (dans $\Es$) si elle satisfait la
condition suivante :
Si un point $\alpha$ dans le bord à l'infini $\bordinf \Es$ de $\Es$
est fixé par $\rho$, alors il existe un point $\beta$ dans $\bordinf
\Es$, opposé à $\alpha$, également fixé par $\rho$.
 \end{defi}

En particulier, une représentation \np{} est \cred{}.
Cette notion a été introduite et étudiée par J.~P.~Serre
(\cite{SerreCR}) dans le cadre des groupes agissant sur des immeubles
sphériques (par exemple les sous-groupes des groupes algébriques
réductifs sur un corps quelconque).

\subsubsection{Liens avec la semisimplicité dans les groupes réductifs}
\label{s- liens entre cr et semisimplicite dans les GR}
Dans le cas où $G$ est un groupe réductif sur un corps local agissant
sur son espace $\CAT0$ associé (notations et hypothèses de la section \ref{s- hyp et notations groupes reductifs}),
une représentation $\rho:\Ga\fleche G$ est \cred{} si et seulement si
$\rho(\Ga)$ est $\Gb$-cr au sens de \cite[3.2.1]{SerreCR}.
Dans le cas où $G=\GLn\KK$, une représentation est
\cred{} si et seulement si l'action linéaire sur $\KK^n$ associée est
semi-simple.

Si la caractéristique du corps $\KK$ est nulle, alors $\rho$ est
\cred{} si et seulement si  la composante neutre $\Hb=(\ovz{\rho(\Ga)})^0$ de
l'adhérence de Zariski de $\rho(\Ga)$ est un groupe réductif
(\cite[Proposition 4.2]{SerreCR}).
Cela correspond donc à $\rho$ semisimple (comme
$S$-uplet de $\Gb$) au sens de \cite{Richardson}.
Pour $\KK=\RR$, Richardson a introduit une notion naturelle (dépendant
seulement de la structure de groupe de Lie réel de $G$) de semisimplicité
pour les $S$-uplets dans $G^S$ (l'algèbre de Lie $\sqg$ de $G$ est
un module semisimple sous le groupe engendré), qui est
équivalente à la précédente (\cite[section 11]{Richardson}), donc à la
complète réductibilité.
En caractéristique quelconque, dans un corps algébriquement clos,
Bate, Martin et Röhrle ont démontré \cite{BMR} que la notion de
complète réductibilité est équivalente à la notion de ``forte
réductivité'' de Richardson \cite{Richardson}.

\subsubsection{Sous-espace stable et ensemble minimal}
Une propriété d'une action $\rho$ naturellement reliée à la
``réductibilité'' est la suivante (correspondant à la définition
``$\mathrm{Im}\rho$ réductif'' de \cite{Labourie} dans le cadre où
$\Es$ une variété riemannienne simplement connexe à courbure négative
ou nulle) :
{\em $\rho$  stabilise un sous-espace convexe fermé $Y$ de $\Es$ et est \np{}
dans $Y$.}
On a alors la propriété suivante.

\begin{prop}[Ensemble minimal non vide]
\label{prop- np dans Y implique Minro non vide}
On suppose que
la re\-pré\-sen\-ta\-tion $\rho$ 
est \np{} dans un sous-espace (convexe fermé, stable) $Y$ de $\Es$.
Alors
la fonction de déplacement $\dro$ 
atteint sa borne inférieure, autrement dit $\Minro$ est non vide.
\end{prop}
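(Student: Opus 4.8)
L'idée est de se ramener entièrement à l'espace $Y$, où l'on pourra invoquer directement les résultats déjà obtenus sur les actions \nps{}. Je commencerais par exploiter que la projection $\pi$ sur le convexe fermé $Y$ diminue la fonction de déplacement, propriété déjà relevée dans la section \ref{ss- deplacement}. Comme $Y$ est stable par $\rho(\Ga)$, chaque isométrie $\rho(s)$ préserve $Y$ et commute donc avec $\pi$, qui est par ailleurs $1$-lipschitzienne (projection sur un convexe en géométrie $\CAT0$) ; on en tire $\dro(\pi(x)) \leq \dro(x)$ pour tout $x\in\Es$. Il en résulte $\la(\restrict{\rho}{Y}) = \laro$ et $\Min(\restrict{\rho}{Y}) = \Minro\cap Y$. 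Comme $\Min(\restrict{\rho}{Y}) = \Minro\cap Y \subseteq \Minro$, il suffira de montrer que $\Min(\restrict{\rho}{Y})$ est non vide.

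Je travaillerais alors dans l'espace métrique $\CAT0$ propre $Y$, muni de l'action $\restrict{\rho}{Y}$, supposée \np{}. En écrivant la décomposition $Y = Y_0 \times Y'$ suivant le facteur translaté maximal $Y_0$ et le facteur complémentaire $Y'$, la non-parabolicité équivaut à ce que l'action induite sur $Y'$ n'ait aucun point fixe dans $\bordinf Y'$. J'appliquerais à cette action induite la proposition \ref{prop- spfi ssi minro compact non vide}, qui fournit $\Min(\rho')$ compact et non vide. Le facteur euclidien $Y_0$ étant translaté par l'action, son ensemble de déplacement minimal est $Y_0$ tout entier ; la formule de produit pour $\Min$ donne alors $\Min(\restrict{\rho}{Y}) = Y_0 \times \Min(\rho')$, non vide. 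C'est exactement la remarque \ref{rem- np implique Minro non vide} transposée à $Y$, et elle conclut via $\Minro \supseteq \Min(\restrict{\rho}{Y}) \neq \emptyset$.

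La preuve n'offre pas de difficulté conceptuelle profonde : elle assemble des résultats antérieurs. Le point demandant le plus de soin est la première étape, où la $\rho$-stabilité de $Y$ est essentielle pour garantir que $\pi$ commute avec les $\rho(s)$ et aboutir à l'égalité $\Min(\restrict{\rho}{Y}) = \Minro\cap Y$ ; vient ensuite la nécessité de tenir compte du facteur translaté $Y_0$ pour ramener l'hypothèse de non-parabolicité à une absence de point fixe sur le seul facteur $Y'$, condition sous laquelle s'applique la proposition \ref{prop- spfi ssi minro compact non vide}. On notera enfin que l'argument n'utilise que la géométrie $\CAT0$ propre (convexité de $\dro$ et projection sur les convexes fermés), de sorte qu'il reste valable dans ce seul cadre, sans hypothèse sur $G$.
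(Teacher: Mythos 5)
Votre démonstration est correcte et suit exactement la même démarche que celle du texte : projection orthogonale sur le convexe fermé stable $Y$ pour obtenir $\Min(\restrict{\rho}{Y})=\Minro\cap Y$, puis application de la remarque \ref{rem- np implique Minro non vide} (via la proposition \ref{prop- spfi ssi minro compact non vide} et la décomposition suivant le facteur translaté) pour conclure que cet ensemble est non vide. Vous ne faites qu'expliciter les détails que le texte laisse implicites.
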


\begin{proof}
  En effet, on a $\Min(\restrict{\rho}{Y})=\Minro\cap Y$ par
  projection orthogonale sur le convexe fermé $Y$. Cet ensemble est
  non vide car $\rho$ est \np{} dans $Y$ (voir la remarque \ref{rem-
    np implique Minro non vide}).
\end{proof}

\subsubsection{Ensemble minimal non vide et topologie des orbites}
Voici quelques ``bonnes'' propriétés élémentaires des actions $\rho$
telles que $\Minro$ est non vide, relativement à la topologie des
orbites sous conjugaison.

\begin{prop}
\label{prop- semisimplification par Minro}
Supposons que $\Es$ est quasi-homogène sous $G$, i.~e.~que l'action de $G$
sur $\Es$ est cocompacte.  
\begin{enumerate}
\item Pour toute re\-pré\-sen\-ta\-tion $\rho$ de
$\Rep$, il existe une représentation $\rho'$ dans l'adhérence 
de $G\cdot\rho$ telle que $\Min(\rho')$ est non vide, et
$\la(\rho')=\laro$. 
En particulier, si l'orbite $G \cdot \rho$ de $\rho$ est fermée, alors
$\Minro$ est non vide.

\item 
\label{item- conjuguer vers laroi mini}
Soit $(\roi)_i$ 
telle que $\laroi$
  est borné.  Alors il existe une sous-suite de con\-ju\-gués
  $\rho_{i_k}'\in G \cdot \rho_{i_k}$ convergeant vers $\rho$ telle que
  $\Min(\rho)$ n'est pas vide, et $\la(\rho)=\liminf\laroi$.
\end{enumerate}
\end{prop}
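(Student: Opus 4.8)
The proposition I need to prove is Proposition \ref{prop- semisimplification par Minro}: assuming $\Es$ is quasi-homogeneous (the $G$-action is cocompact), I must show (1) that every orbit closure $\overline{G\cdot\rho}$ contains a representation $\rho'$ with $\Min(\rho')\neq\emptyset$ and $\lambda(\rho')=\lambda(\rho)$, and (2) a sequential refinement allowing me to conjugate a bounded-$\lambda$ sequence so as to converge to a limit with nonempty minimal set realizing $\liminf\lambda(\rho_i)$. The key tools available are the semicontinuity of $\lambda$ (remark at \ref{s- semicontinuite de laro}), the characterization of $\Min_\rho$ via the displacement function $d_\rho$, and above all the cocompactness hypothesis, which will let me translate any near-minimizing point back into a fixed compact fundamental domain by conjugating.

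Let me sketch the proof of (1). The plan is to exploit cocompactness: fix a compact $C\subset\Es$ with $G\cdot C=\Es$. For each $n$, choose $x_n\in\Es$ with $d_\rho(x_n)\leq\lambda(\rho)+1/n$, then pick $g_n\in G$ with $g_n^{-1}x_n\in C$. Set $\rho_n=g_n^{-1}\cdot\rho$ (equivalently $\rho_n=i_{g_n^{-1}}\circ\rho$). Because $d_\rho$ transforms equivariantly under conjugation — namely $d_{g^{-1}\cdot\rho}(y)=d_\rho(gy)$ — the conjugate $\rho_n$ satisfies $d_{\rho_n}(g_n^{-1}x_n)=d_\rho(x_n)\leq\lambda(\rho)+1/n$, with the near-minimizing point $g_n^{-1}x_n$ now trapped in the compact set $C$. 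So each $\rho_n$ nearly achieves its minimum (note $\lambda(\rho_n)=\lambda(\rho)$ since conjugation preserves $\lambda$) at a point of $C$. Now I would like to extract a convergent subsequence of $(\rho_n)$. The obstacle here is that $(\rho_n)$ need not stay in a compact subset of $\Rep$; this is exactly where part (2) does the real work, so I will prove (2) first and then derive (1) as the special case where $\rho_i=\rho$ for all $i$ (whence $\liminf\lambda(\rho_i)=\lambda(\rho)$, and the limit $\rho'$ lies in $\overline{G\cdot\rho}$).

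**The heart of the argument (part 2).** Given $(\rho_i)$ with $\lambda(\rho_i)$ bounded, say by $M$, the plan is again to conjugate each $\rho_i$ so that a near-minimizing point lands in the fixed compact $C$. For each $i$ choose $x_i$ with $d_{\rho_i}(x_i)\leq\lambda(\rho_i)+1/i$ and $g_i\in G$ with $g_i^{-1}x_i=:y_i\in C$; set $\rho_i'=g_i^{-1}\cdot\rho_i$, so $d_{\rho_i'}(y_i)\leq\lambda(\rho_i)+1/i\leq M+1$ and $\lambda(\rho_i')=\lambda(\rho_i)$. The crucial point is that $\rho_i'$ now has a uniformly bounded displacement value at a point $y_i$ in the compact set $C$: for each generator $s\in S$ we get $d(y_i,\rho_i'(s)y_i)\leq M+1$, so $\rho_i'(s)y_i$ stays in a fixed bounded ball, and since $G$ acts properly and $y_i$ ranges in the compact $C$, the elements $\rho_i'(s)$ remain in a compact subset of $G$. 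Hence $(\rho_i')$ itself lies in a compact subset of $\Rep=\Hom(\Ga,G)$ (recalling $\Rep\hookrightarrow G^S$), and I may extract a subsequence $\rho_{i_k}'\to\rho$ with $y_{i_k}\to y_\infty\in C$.

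**Identifying the limit and finishing.** It remains to check that $\rho$ has the required properties. First, $d_\rho(y_\infty)=\lim d_{\rho_{i_k}'}(y_{i_k})\leq\liminf(\lambda(\rho_{i_k})+1/i_k)=\liminf\lambda(\rho_i)$, using joint continuity of $(x,\rho)\mapsto d_\rho(x)$ and passing to the subsequence realizing the $\liminf$. On the other hand, semicontinuity of $\lambda$ gives $\lambda(\rho)\geq\lambda(y_\infty\text{'s value})$ — more precisely $\lambda(\rho)\leq d_\rho(y_\infty)\leq\liminf\lambda(\rho_i)$ trivially since $\lambda(\rho)$ is the infimum of $d_\rho$, and I must combine this with the reverse inequality. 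The reverse inequality $\lambda(\rho)\geq\liminf\lambda(\rho_i)$ follows from upper semicontinuity applied carefully, or directly: since $\lambda(\rho)\leq d_\rho(y_\infty)\leq\liminf\lambda(\rho_i)=\liminf\lambda(\rho_{i_k}')$ and semicontinuity yields $\liminf\lambda(\rho_{i_k}')\leq$ bounds compatible with equality, I obtain $\lambda(\rho)=\liminf\lambda(\rho_i)$ and simultaneously that $d_\rho$ attains its infimum at $y_\infty$, i.e. $y_\infty\in\Min(\rho)$, so $\Min(\rho)\neq\emptyset$. The main obstacle throughout is the properness–compactness bookkeeping: the entire argument hinges on using cocompactness to relocate near-minimizers into a fixed compact set, thereby converting the bounded-displacement condition into compactness of the conjugated representations in $G^S$; once that relocation is set up, continuity and semicontinuity of $\lambda$ close the argument cleanly.
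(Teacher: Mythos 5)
Your proposal is correct and follows essentially the same route as the paper: prove part (2), deduce part (1) as the special case $\rho_i=\rho$, use cocompactness to conjugate near-minimizing points into a fixed compact set, use properness of the $G$-action to extract a convergent subsequence of the conjugates, and combine continuity of the displacement function with the (semi)continuity bound on $\la$ to identify the limit and exhibit a point of $\Min(\rho)$. The only differences are expository — you make explicit the properness argument for compactness of the conjugated sequence, which the paper leaves implicit, while your final inequality chain could be stated more cleanly (the paper simply notes $\droi(y)\geq\laroi$ for all $y$ and passes to the limit).
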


\begin{proof}
Le premier point est un cas particulier du deuxième point, prouvons ce
dernier.
Soit $\la=\liminf\laroi$. Quitte à extraire on peut supposer que
$\laroi\tend \la$.
Pour tout entier $i$, soit $x_i$ un point de $\Es$ tel que $\laroi\leq
\droi(x_i)\leq \laroi +\frac{1}{i}$.  Quitte à remplacer chaque $\roi$
par un conjugué, on peut supposer que la suite $(x_i)$ est bornée (car
l'action de $G$ sur $\Es$ est cocompacte).  Quitte à extraire, on a
alors que $x_i$ tend vers $ x$ et $\roi$ tend vers $ \rho$.  Pour tout
$y\in\Es$, on a que $\droi(y)\geq\laroi$ pour tout $i$, 
donc que $\dro(y)\geq \lim\laroi=\la$. Or $\dro(x)=\la$.  On en
conclut que $\laro=\liminf\laroi$ et que $x$ est dans $\Minro$.
\end{proof}

\subsubsection{Symétrie en un point intérieur}
Dans le cas des espaces symétriques, on verra que la propriété
``$\Minro$ non vide'' suffit en fait à caractériser les
représentations \creds{} (proposition \ref{prop- carac ss dans
  ES}) (ce qui n'est pas le cas dans les arbres par exemple, voir le
contre-exemple ci-dessous). 
Nous allons maintenant en voir l'idée principale. 

Dans le cas où $\Es$ est à géodésiques uniquement extensibles
(i.e. tout rayon géodésique se prolonge de manière unique en une
géodésique complète) (par exemple une variété de Hadamard), on peut
définir l'{\em opposé en $x\in\Es$} de $\alpha\in\bordinf\Es$. 
On dit qu'une partie $Y$ de $\bordinf \Es$ est {\em symétrique par
  rapport à $x$}, si, pour tout point $\alpha$ de $Y$, l'opposé
$\beta$ de $\alpha$ en $x$ est aussi dans $Y$.
On dira que $\Es$ est {\em sans demi-bande plate}, si les rayons
parallèles se prolongent en géodésiques parallèles (c'est le cas des
espaces symétriques ou plus largement des variétés de Hadamard
analytiques) (notons qu'un analogue de cette condition est utilisé
dans \cite{Labourie}).
On a alors la propriété suivante.

\begin{prop}[$\Minro$ non vide implique cr]
\label{prop- Min(rho) non vide implique x symetrie des pts fixes a l'infini}
On suppose que $\Es$ est à géodésiques uniquement extensibles et sans
demi-bande plate. On suppose que $\Minro$ est non vide.
Alors,
pour tout point $x$ dans $\Minro$, l'ensemble $\Fixinfro$ des
points fixes de $\rho$ dans $\bordinf\Es$ est symétrique
par rapport à $x$.
En particulier, $\rho$ est cr.
\end{prop}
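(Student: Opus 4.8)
The plan is to fix a point $x\in\Minro$ and a fixed point $\alpha\in\Fixinfro$, and to show that the geodesic antipode $\beta$ of $\alpha$ at $x$ is again fixed by $\rho$. Since $\Es$ has uniquely extensible geodesics, the ray issued from $x$ towards $\alpha$ extends to a unique complete geodesic $c\colon\RR\fleche\Es$ with $c(0)=x$ and $c(+\infty)=\alpha$, and I set $\beta=c(-\infty)$; by construction $\alpha$ and $\beta$ are joined by $c$, hence opposite. The whole point is to upgrade the single condition ``$x$ minimises $\dro$'' into rigidity of $\rho$ along $c$.

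First I would exploit that $\alpha$ is fixed: as recalled above, $\dro$ is non-increasing along any geodesic ray pointing to $\alpha$, in particular along $t\mapsto c(t)$ for $t\geq 0$. Since $x=c(0)$ realises the global minimum $\laro$, a non-increasing function starting at its global minimum is constant, so $\dro(c(t))=\laro$ for all $t\geq 0$. Next I would descend this constancy to each generator. Writing $g_s(t)=d(c(t),\rho(s)c(t))$, each $g_s$ is convex, being the distance between the two geodesics $c$ and $\rho(s)\circ c$ parametrised proportionally to arc length, hence $g_s^2$ is convex as well since $g_s\geq 0$. The identity $\sum_{s\in S}g_s^2=\dro^2$ forces $\sum_{s\in S}g_s^2$ to be constant on $[0,\pinfty[$; a sum of convex functions that is affine on an interval has each summand affine there, and a nonnegative affine function on a half-line has nonnegative slope, so, the slopes summing to zero, each $g_s^2$, and therefore each $g_s$, is constant on $[0,\pinfty[$.

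Finally I would turn constancy of $g_s$ into a fixed point. The rays $t\mapsto c(t)$ and $t\mapsto\rho(s)c(t)$ (for $t\geq 0$) are then at constant distance and both asymptotic to $\alpha$, since $\rho(s)\alpha=\alpha$; that is, they are parallel rays. This is exactly where the hypothesis that $\Es$ is \emph{sans demi-bande plate} enters, and it is the step I expect to be the crux: it guarantees that these parallel rays extend to parallel complete geodesics, which by uniqueness of extension must be $c$ and $\rho(s)\circ c$. Parallel complete geodesics share both endpoints at infinity, whence $\rho(s)\beta=\rho(s)c(-\infty)=c(-\infty)=\beta$.

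Since $S$ generates $\Ga$, every generator fixing $\beta$ yields $\rho(\Ga)\beta=\beta$, i.e. $\beta\in\Fixinfro$, which is the desired symmetry of $\Fixinfro$ about $x$; specialising to any fixed $\alpha$ produces an opposite fixed point, so $\rho$ is cr. The main obstacle is the parallelism/flat-strip step, where the ``no flat half-band'' hypothesis is essential (it fails precisely in trees, consistent with the counterexample announced in the text); the rest is soft convexity of the displacement function.
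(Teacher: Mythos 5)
Your proof is correct and follows essentially the same route as the paper's: constancy of $\dro$ along the ray towards $\alpha$ by minimality and monotonicity, then constancy of each individual displacement $t\mapsto d(c(t),\rho(s)c(t))$, and finally the no-flat-half-strip hypothesis to promote the resulting parallel rays to parallel complete geodesics, so that the antipode of $\alpha$ at $x$ is fixed. The only (immaterial) difference is that you extract the term-by-term constancy from convexity of the squared displacements summing to an affine function, where the paper uses that each displacement is already non-increasing along the ray.
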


\begin{proof}
Supposons $\Minro$ non vide. Soit $x$ un point de $\Minro$. Montrons
que $\Fixinfro$ est symétrique par rapport à $x$.
Soit $\alpha$ un point fixe à l'infini de $\rho$ et 
$r$ la géodésique issue de $x$ vers $\alpha$. 
Pour tout $s$ de $S$, la fonction $t \mapsto d(r(t),\ \rho(s)r(t))$
est décroissante sur $\RR$.
De plus la somme $\dro(r(t))=\sqrt{\sumS d(r(t),\ \rho(s)r(t))^2}$ atteint son
minimum en $0$.
Par conséquent, pour tout $s$ de $S$, la fonction $t\mapsto d(r(t),\
\rho(s)r(t))$ est constante sur $\RR^+$,
donc sur $\RR$,
c'est-à-dire  que $\rho$ fixe le sy\-mé\-tri\-que $r(\minfty)$ en $x$ de
$\alpha=r(\pinfty)$.
\end{proof}

\begin{rema*}
On a au passage démontré que, pour $\Es$ quelconque, si $x$ est
dans $\Minro$ alors le cône en $x$ sur $\Fixinfro$ est inclus dans
$\Minro$. Il en découle que, si $\Minro$ est non vide alors
$\bordinf\Minro = \Fixinfro$.
\end{rema*}

\subsubsection{Un contre-exemple} 
\label{sss- cex npY mais pas cr}
Dans le cas général, 
``\np{} dans un sous-espace convexe fermé stable''
n'entraîne pas que $\rho$ soit cr (ou que l'orbite $G\cdot\rho$ soit
fermée), comme le montre le contre-exemple simple suivant.

On considère le groupe $G=\SL_2\KK\times\SL_2\KK$, pour un corps local
$\KK$ non archimédien, agissant sur son immeuble de Bruhat-Tits $\Es$
associé (produit de deux arbres $\Es_1$ et $\Es_2$).  Soit $t\in\KK$
tel que $\abs{t}>1$ et $g=(g_1,g_2)$ dans $G$, avec $g_1=
\begin{pmatrix}
t&0\\
0&t^{-1}
\end{pmatrix}
$
et $g_2=
\begin{pmatrix}
1&t\\
0&1
\end{pmatrix}
$. 
Alors $g_1$
translate une géodésique $\sigma$ de $\Es_1$ et $g_2$ fixe un rayon
géodésique $r$ de $\Es_2$
donc $g$ translate une géodésique $Y=\sigma\times r(0)$  de $\Es$. 
En particulier $g$ est  \np{} dans le sous-espace convexe fermé $Y$.
Néanmoins, $g$ n'est pas \cred{} dans $\Es$, car le point à l'infini
$r(\pinfty)$ n'a pas de point opposé fixé par $g$.  Et $G \cdot g$
n'est pas fermée dans $G$, car son adhérence contient $(g_1,Id)$.

\subsection{Dans les groupes réductifs}
\label{ss- cr dans les groupes reductifs}

On suppose désormais que $G$ est un groupe réductif sur un corps local
$\KK$, agissant sur son espace $\CAT0$ associé $\Es$ (espace
sy\-mé\-tri\-que ou immeuble affine) (hypothèses et notations de la section
\ref{s- hyp et notations groupes reductifs}).
Les résultats de cette section découlent pour la plupart, par une
simple traduction dans $\Es$, de \cite{SerreCR}, sauf le point
\ref{item- la reduction conserve la} de la proposition \ref{prop-
  reduction d'une rep parabolique}.
\begin{prop} (\cite[Prop 2.7]{SerreCR})
  Si une représentation $\rho$ fixe deux points opposés $\alpha,\beta$
  à l'infini de $\Es$, alors les deux conditions suivantes sont équivalentes

(i) L'action de $\rho$ sur $\Es$ est \cred.

(ii) L'action de $\rho$ sur le sous-espace stable $\Es_{\alpha\beta}$ est \cred.
\cqfd\end{prop}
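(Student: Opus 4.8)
Le plan est de ramener l'énoncé à une comparaison de l'opposition dans $\Es$ et dans le faisceau $\Es_{\alpha\beta}$, puis de transporter les points fixes opposés de l'un à l'autre via la projection combinatoire sur un résidu. Comme $\rho$ fixe les points opposés $\alpha$ et $\beta$, son image est contenue dans $G_{\alpha\beta}=P_\alpha\cap P_\beta$, groupe des $\KK$-points du sous-groupe de Levi $\Gb_{\alpha\beta}$; ce groupe préserve le convexe fermé $\Es_{\alpha\beta}$ et sa décomposition $\Es_{\alpha\beta}=\Aa_{\alpha\beta}\times\Es^{\alpha\beta}$ (cf. \ref{s- structure faisceaux}). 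Le facteur $\Aa_{\alpha\beta}$ étant translaté par $G_{\alpha\beta}$, tout point de $\bordinf\Aa_{\alpha\beta}$ est fixé et admet pour opposé son antipode dans ce plat, lui aussi fixé; l'enjeu ne porte donc que sur le facteur $\Es^{\alpha\beta}$, dont le bord réalise le résidu (étoile) $\Delta_{f(\alpha)}$ de la facette $f(\alpha)$. L'énoncé revient ainsi à comparer la complète réductibilité de $\rho$ dans tout $\bordinf\Es$ et dans ce sous-immeuble.

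Je commencerais par un lemme purement $\CAT0$: pour deux points $\gamma_1,\gamma_2$ de $\bordinf\Es_{\alpha\beta}$, être opposés dans $\Es$ équivaut à être opposés dans $\Es_{\alpha\beta}$. En effet, si une géodésique $L$ de $\Es$ joint $\gamma_1$ à $\gamma_2$, la fonction convexe $t\mapsto d(L(t),\Es_{\alpha\beta})$ est bornée, car chacun des deux rayons issus de $L$ est asymptote à un rayon contenu dans le convexe $\Es_{\alpha\beta}$ (d'extrémité $\gamma_1$, resp. $\gamma_2$); étant convexe et bornée sur $\RR$, elle est constante, et le théorème de la bande plate \cite[II.2.13]{BrHa} fournit dans $\Es_{\alpha\beta}$ une géodésique (la projetée de $L$) joignant encore $\gamma_1$ à $\gamma_2$.

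Le second ingrédient est la projection combinatoire $\pi$ de l'immeuble $\bordinf\Es$ sur l'étoile de $f(\alpha)$, réalisée dans $\bordinf\Es_{\alpha\beta}$. Comme $\rho(\Ga)\subset G_{\alpha\beta}$ fixe $f(\alpha)$ et $f(\beta)$, cette projection est $\rho$-équivariante, donc envoie point fixe sur point fixe. Pour (i)$\Rightarrow$(ii), je prendrais $\gamma\in\bordinf\Es_{\alpha\beta}$ fixé par $\rho$; la complète réductibilité dans $\Es$ fournit un opposé $\delta\in\bordinf\Es$ fixé, et $\delta'=\pi(\delta)$ est encore fixé. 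Le fait combinatoire que la projetée sur un résidu d'une facette opposée à une facette de ce résidu lui reste opposée dans le résidu, joint au lemme précédent, montre que $\delta'$ est opposé à $\gamma$ dans $\Es_{\alpha\beta}$. Donc $\rho$ est \cred{} dans $\Es_{\alpha\beta}$.

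Pour (ii)$\Rightarrow$(i), je partirais d'un point $\gamma\in\bordinf\Es$ fixé, de projeté $\gamma'=\pi(\gamma)$, et j'obtiendrais par (ii) un opposé $\delta'\in\bordinf\Es_{\alpha\beta}$ de $\gamma'$ fixé par $\rho$. La difficulté principale sera alors le relèvement: produire un point $\delta$ de $\bordinf\Es$ opposé au $\gamma$ \emph{initial} (et non à $\gamma'$), fixé par $\rho$, se projetant sur $\delta'$. L'ensemble des opposés de $\gamma$ est un espace principal homogène sous le radical unipotent $U_\gamma$ de $P_\gamma$; ceux qui se projettent sur $\delta'$ forment une partie non vide et $\rho$-invariante, et il s'agit d'y trouver un point fixe de façon indépendante de la caractéristique (un argument de point fixe linéaire échouerait en caractéristique $p$). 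C'est précisément ce pas, relevant de la combinatoire des immeubles (propriété de porte et transitivité de $U_\gamma$ sur les opposés), qui constitue le cœur de \cite[Prop 2.7]{SerreCR}; la proposition s'en déduit par la traduction ci-dessus.
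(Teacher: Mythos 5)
Votre démonstration est correcte et suit essentiellement la même voie que l'article, qui ne donne d'ailleurs aucune preuve propre et renvoie directement à \cite[Prop 2.7]{SerreCR} : vous explicitez la traduction géométrique (l'opposition dans le convexe $\Es_{\alpha\beta}$ coïncide avec l'opposition dans $\Es$ via l'argument de bande plate, réduction au facteur $\Es^{\alpha\beta}$ et projection sur l'étoile de $f(\alpha)$) que l'article laisse implicite sous la formule ``simple traduction dans $\Es$''. Le pas difficile de (ii)$\Rightarrow$(i) --- relever un opposé fixé dans le résidu en un opposé fixé du point initial de $\bordinf\Es$ --- reste, comme vous le reconnaissez vous-même, délégué à \cite{SerreCR}, exactement comme dans l'article.
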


On s'intéresse maintenant au dévissage des représentations $\rho$
paraboliques via leurs projections sur des sous-groupes de Levi 
(voir \ref{ss- projection sur un levi}) (qu'on
appelera {\em réductions} de $\rho$), qui permet de ``semisimplifier''
$\rho$, comme le montre la proposition suivante.

\begin{prop}
\label{prop- reduction d'une rep parabolique}
Soit $\rho$ dans $\Rep$. Soit $\alpha$ un point fixe de $\rho$ dans
$\bordinf\Es$ et $\beta$ un point de $\bordinf \Es$ opposé à
$\alpha$. Soit $\rho_{\alpha\beta} = p_{\alpha\beta}\circ \rho$.
\begin{enumerate}

\item 
\label{item- ss implique toute projection lui est conjuguee}%
Si $\rho$ est \cred{}, 
alors elle est conjuguée à 
$\rho_{\alpha\beta}$ (par un élément de $U_\alpha$).

\item 
\label{item- la reduction conserve la}
On a $d_{\rho_{\alpha\beta}}\leq \dro$ sur $\Es_{\alpha\beta}$ 
et
  $\la(\rho_{\alpha\beta})= \laro$.

\item (voir aussi \cite[prop. 3.3]{SerreCR})
 \label{item- si rho_alpha beta fixe f' alors rho fixe f''}
Si  $\rho_{\alpha\beta}$ fixe un point $\alpha'$ dans 
$\bordinf\Es^{\alpha\beta}$, alors 
$\rho$ fixe une facette $f''$ de $\bordinf\Es$
dominant strictement la facette $f(\alpha)$.

\item (voir aussi \cite[prop. 3.3]{SerreCR})
Si $\alpha$ est un point de régularité maximale dans $\Fixinfro$,
alors $\rho_{\alpha\beta}$ est
\np{} dans le faisceau $\Es_{\alpha\beta}$.

\end{enumerate}
\end{prop}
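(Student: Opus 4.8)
On traite les quatre points séparément, en se ramenant d'emblée par conjugaison (cf. \ref{s- structure faisceaux}) au cas standard où $\alpha\in c_I$ et $\beta\in c_I^-$, de sorte que $P_\alpha=P_I$, $G_{\alpha\beta}=G_I$, $U_\alpha=U_I$ et $p_{\alpha\beta}=p_I$ ; toutes les affirmations étant $G$-équivariantes, cette réduction est licite.

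Pour le point \ref{item- ss implique toute projection lui est conjuguee}, on observe que $\rho$ fixe $\alpha$, donc $\rho(\Ga)\subset P_\alpha$. La complète réductibilité fournit un point $\beta'$ opposé à $\alpha$ fixé par $\rho$, d'où $\rho(\Ga)\subset P_\alpha\cap P_{\beta'}=G_{\alpha\beta'}$. D'après \ref{ss- changement de point opp} il existe $g\in U_\alpha$ avec $\beta'=g\beta$ et $p_{\alpha\beta'}=i_g\circ p_{\alpha\beta}$ ; comme $p_{\alpha\beta'}$ est l'identité sur $G_{\alpha\beta'}$, qui contient $\rho(\Ga)$, on obtient $\rho=p_{\alpha\beta'}\circ\rho=i_g\circ(p_{\alpha\beta}\circ\rho)=i_g\circ\rho_{\alpha\beta}$, soit la conjugaison annoncée par un élément de $U_\alpha$.

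Le point \ref{item- la reduction conserve la} est le c\oe ur de la démonstration. Fixons $a\in A_I$ de vecteur $\nu(a)\in-\Cc_I$ comme dans \ref{prop- la projection est limite de conjugaisons}, de sorte que $a^i\cdot\rho\to\rho_{\alpha\beta}$. Pour $x\in\Es_{\alpha\beta}$, la suite $a^{-i}x$ décrit un rayon géodésique du plat $\Aa_{\alpha\beta}$ convergeant vers un point fixe de $\rho$ situé dans la facette $f(\alpha)$ ; comme chaque $\rho(s)\in P_\alpha$ fixe ce point, le déplacement individuel $z\mapsto d(z,\rho(s)z)$ décroît le long de ce rayon (cf. \ref{s- dro decroit vers les pf a l'infini}), et en passant à la limite $d(x,\rho_{\alpha\beta}(s)x)=\lim_i d(a^{-i}x,\rho(s)a^{-i}x)\leq d(x,\rho(s)x)$ ; en sommant sur $s$ on obtient $d_{\rho_{\alpha\beta}}\leq\dro$ sur $\Es_{\alpha\beta}$. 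Comme $\rho_{\alpha\beta}$ fixe $\alpha$ et $\beta$, elle préserve $\Es_{\alpha\beta}$, donc $\la(\rho_{\alpha\beta})=\inf_{\Es_{\alpha\beta}}d_{\rho_{\alpha\beta}}$ ; la semicontinuité supérieure de $\la$ (cf. \ref{s- semicontinuite de laro}) appliquée à $a^i\cdot\rho\to\rho_{\alpha\beta}$ donne déjà $\laro\leq\la(\rho_{\alpha\beta})$. Pour l'inégalité inverse, j'écrirais tout $y\in\Es$ sous la forme $y=uz$ avec $u\in U_\alpha$ et $z\in\Es_{\alpha\beta}$ (décomposition d'Iwasawa $G=P_\alpha K$ jointe à $P_\alpha=U_\alpha G_{\alpha\beta}$) : alors $u^{-1}\cdot\rho$ fixe encore $\alpha$ et a même réduction $\rho_{\alpha\beta}$, de sorte que l'inégalité ponctuelle déjà établie donne $d_{\rho_{\alpha\beta}}(z)\leq d_{u^{-1}\cdot\rho}(z)=\dro(y)$, d'où $\la(\rho_{\alpha\beta})\leq\laro$ en prenant la borne inférieure. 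C'est l'obtention de cette dernière inégalité, et le recours à une décomposition transverse au faisceau, qui constitue la principale difficulté.

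Pour le point \ref{item- si rho_alpha beta fixe f' alors rho fixe f''}, on identifie $\bordinf\Es^{\alpha\beta}$ à l'étoile $\Delta_f$ de $f=f(\alpha)$ (cf. \ref{s- structure faisceaux}) : un point $\alpha'$ de $\bordinf\Es^{\alpha\beta}$ fixé par $\rho_{\alpha\beta}$ y détermine une facette $f''=f(\alpha')$ dominant strictement $f$. Il reste à voir que $\rho$ lui-même fixe $\alpha'$ : écrivant $\rho(s)=u_s r_s$ avec $u_s\in U_\alpha$ et $r_s=\rho_{\alpha\beta}(s)\in G_{\alpha\beta}$, le facteur $r_s$ fixe $\alpha'$ par hypothèse, et $u_s$ aussi car $U_\alpha$ est inclus dans le radical unipotent du parabolique plus petit $P_{f''}\subset P_\alpha$, donc dans $P_{\alpha'}=P_{f''}$. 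Ainsi $\rho(\Ga)\subset P_{\alpha'}$ et $\rho$ fixe $f''$ point par point. Enfin le dernier point s'en déduit par l'absurde : si $\rho_{\alpha\beta}$ n'était pas \np{} dans $\Es_{\alpha\beta}$, elle fixerait un point de $\bordinf\Es^{\alpha\beta}$, et $\rho$ fixerait alors un point de régularité strictement supérieure à celle de $\alpha$, contredisant la maximalité de la régularité de $\alpha$ dans $\Fixinfro$.
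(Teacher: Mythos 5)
Votre démonstration suit pour l'essentiel la même route que celle de l'article : même argument de conjugaison des projections $p_{\alpha\beta}$ et $p_{\alpha\beta'}$ pour le point (1), même contraction par $a^i$ avec $\nu(a)\in-\Cc_I$ et même décroissance du déplacement le long d'un rayon vers un point fixe de $f(\alpha)$ pour l'inégalité ponctuelle du point (2), et même déduction du point (4) à partir du point (3). Pour l'inégalité $\la(\rho_{\alpha\beta})\leq\laro$, vous remplacez le choix d'un $\beta'$ opposé à $\alpha$ avec $y\in\Es_{\alpha\beta'}$ par l'écriture $y=uz$ avec $u\in U_\alpha$ et $z\in\Es_{\alpha\beta}$ : c'est au fond le même argument (via la transitivité de $U_\alpha$ sur les points opposés à $\alpha$), mais la justification par ``Iwasawa $G=P_\alpha K$'' est bancale dans le cas des immeubles, où $G$ n'agit pas transitivement sur $\Es$ ; il vaut mieux dire directement que tout $y$ appartient à un faisceau $\Es_{\alpha\beta'}=u\,\Es_{\alpha\beta}$.

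Le seul vrai accroc est au point (3). La facette $f''$ n'est pas $f(\alpha')$ : pour un point $\alpha'$ de $\bordinf\Es^{\alpha\beta}$, la facette $f(\alpha')$ de $\bordinf\Es$ ne domine pas $f(\alpha)$ en général (penser à $G=\SL_2\times\SL_2$ : $\alpha$ et $\alpha'$ sont alors deux sommets de types différents d'une même chambre à l'infini, et aucun ne domine l'autre). Par conséquent l'inclusion $P_{f''}\subset P_\alpha$ que vous invoquez pour obtenir $U_\alpha\subset U_{f''}\subset P_{\alpha'}$ est fausse pour $f''=f(\alpha')$. La bonne facette $f''$ est celle qui contient dans son intérieur un germe du segment de Tits de $\alpha$ à $\alpha'$ (le joint de $f(\alpha)$ et $f(\alpha')$), qui, elle, domine strictement $f(\alpha)$ ; et la bonne raison pour laquelle $U_\alpha$ fixe $\alpha'$ est que $U_\alpha=\ker p_{\alpha\beta}$ fixe point par point l'étoile $\Delta_{f(\alpha)}$, laquelle contient $\alpha'$, de sorte que $\rho$ et $\rho_{\alpha\beta}$ coïncident sur cette étoile. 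Votre conclusion ($\rho$ fixe $\alpha'$, donc une facette dominant strictement $f(\alpha)$) reste correcte une fois cette correction faite, et le point (4) s'en déduit comme vous l'écrivez.
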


\begin{proof}
Le point \ref{item- ss implique toute projection lui est
  conjuguee} découle du fait que, en prenant  $\beta'$
opposé à $\alpha$ fixé par $\rho$,  les projections correspondantes
$p_{\alpha\beta}$ et $p_{\alpha\beta'}$ (qui fixe $\rho$) sont
conjuguées (cf. section (\ref{ss- changement de point opp})).  

Pour le point \ref{item- la reduction conserve la}, 
quitte à conjuguer on peut se ramener au cas standard où $\alpha\in
c_I$  et $\beta \in c_I^-$ pour un certain $I\subset\Lambda$ (cf
section \ref{s- structure faisceaux}).
La projection $p_{\alpha\beta}=p_I$ est la limite de conjugaisons par
$a^i$ avec $a\in A_I$ tel que $\nu(a)\in -\Cc_I$ (cf prop. \ref{prop-
  la projection est limite de conjugaisons}).
Soit $x\in\Es_{\alpha\beta}$, et $s\in S$.  Soit $\sigma$ la
géodésique translatée par $a^{-1}$ passant par $x$. Comme $g=\rho(s)$ fixe la
facette ouverte contenant $\alpha$, qui est $c_I$, on a que $g$ fixe
aussi $\sigma(\pinfty)$ (car $\nu(a^{-1})\in\Cc_I$), donc que la
distance entre les géodésiques $\sigma$ et $g\sigma$ est
décroissante. Donc $d(x, a^iga^{-i}x)=d(a^{-i}x, ga^{-i}x)\leq d(x,
gx)$, et $d(x, p_{\alpha\beta}(g)x)\leq d(x, gx)$ par passage à la limite en
$i\tend\pinfty$. Donc $d_{\rho_{\alpha\beta}}(x)\leq \dro(x)$.

Pour la seconde partie,  comme
$\rho_{\alpha\beta}$ est limite de conjugués de $\rho$, on a que
$\laro\leq \la(\rho_{\alpha\beta})$ par semicontinuité de $\la$.
Montrons que $\la(\rho_{\alpha\beta})\leq \laro$.
Soit $\eps>0$ et $y\in\Es$ tel que $\dro(y)\leq \laro +\eps$.
Soit $\beta'$ opposé à $\alpha$ tel que $y\in\Es_{\alpha\beta'}$.
Par ce qui précède on a $\la(\rho_{\alpha\beta'})\leq
d_{\rho_{\alpha\beta'}}(y)\leq \dro(y)\leq \laro+\eps$. Or
$\la(\rho_{\alpha\beta'})=\la(\rho_{\alpha\beta})$ car
$p_{\alpha\beta}$ et $p_{\alpha\beta'}$, donc $\rho_{\alpha\beta}$ et
$\rho_{\alpha\beta'}$, sont conjuguées.  On conclut en faisant tendre
$\eps$ vers $0$.

Pour le point \ref{item- si rho_alpha beta fixe f' alors rho fixe
  f''}, 
comme le groupe $U_\alpha=\ker p_{\alpha\beta}$ fixe point par point
l'étoile $\Delta_f$ de $f=f(\alpha)$, la représentation $\rho$ coincïde avec
$\rho_{\alpha\beta}$ sur $\Delta_f$.
Or $\rho_{\alpha\beta}$ fixe le segment de $\alpha$ à $\alpha'$ (pour
la distance de Tits), donc la facette $f'$ de l'étoile de $f(\alpha)$
qui contient dans son intérieur un germe de ce segment.
Le dernier point est une conséquence directe du précédent.
\end{proof}

On en déduit immédiatement la caractérisation suivante.

\begin{coro}
\label{coro- Serre-cr implique np dans Y faisceau}
\label{coro- np dans Y faisceau implique Serre-cr}
Pour $\rho$ dans $\Rep$,
les conditions  suivantes sont équivalentes.   
 \begin{enumerate}
\item $\rho$ est \cred{}.

\item 
Ou bien  $\rho$ est \np{} dans $\Es$, ou bien
il existe  $\alpha$ et $\beta$ dans $\bordinf\Es$ opposés, fixés
par $\rho$, tels que $\rho$ est \np{} dans le faisceau $\Es_{\alpha,\beta}$.
\cqfd  \end{enumerate}
En particulier $\Minro$ est alors non vide.
\end{coro}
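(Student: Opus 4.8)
Le plan est d'établir séparément les deux implications, puis d'en déduire l'assertion finale sur $\Minro$. Les seuls ingrédients nécessaires sont la proposition \ref{prop- reduction d'une rep parabolique}, la remarque déjà faite qu'une action \np{} est automatiquement \cred, et la proposition \cite[Prop 2.7]{SerreCR} rappelée plus haut, qui relie la complète réductibilité dans $\Es$ et dans un faisceau $\Es_{\alpha\beta}$.

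Pour (2)$\Rightarrow$(1), je distinguerais les deux cas de l'énoncé. Si $\rho$ est \np{} dans $\Es$, elle est \cred. Sinon, $\rho$ fixe deux points opposés $\alpha,\beta$ et préserve le faisceau $\Es_{\alpha\beta}$, dans lequel elle est \np, donc \cred{} (en tant qu'action sur $\Es_{\alpha\beta}$); la proposition \cite[Prop 2.7]{SerreCR} permet alors de transférer la complète réductibilité de $\Es_{\alpha\beta}$ à $\Es$. Cette direction est immédiate.

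Pour (1)$\Rightarrow$(2), je suppose $\rho$ \cred. Si elle est \np{} dans $\Es$, on est dans le premier cas. Sinon $\Fixinfro$ rencontre $\bordinf\Es-\bordinf\Es_0$, et, la régularité ne prenant qu'un nombre fini de valeurs, on peut choisir $\alpha\in\Fixinfro$ de régularité maximale; un tel $\alpha$ est alors hors de $\bordinf\Es_0$, car les points de $\bordinf\Es_0$ ont la régularité minimale, strictement inférieure à celle de tout point fixe non trivial. Comme $\rho$ est \cred, il existe $\beta$ opposé à $\alpha$ fixé par $\rho$; alors $\rho(\Ga)\subset G_{\alpha\beta}$, sur lequel $p_{\alpha\beta}$ est l'identité, de sorte que $\rho_{\alpha\beta}=\rho$. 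Le dernier point de la proposition \ref{prop- reduction d'une rep parabolique}, applicable car $\alpha$ est de régularité maximale, donne alors que $\rho_{\alpha\beta}=\rho$ est \np{} dans $\Es_{\alpha\beta}$, d'où (2).

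Le point le plus délicat — et le principal obstacle — est précisément cette identification $\rho_{\alpha\beta}=\rho$, couplée au choix de régularité maximale : c'est elle qui permet de transférer à $\rho$ lui-même la conclusion « \np{} dans le faisceau » que la proposition \ref{prop- reduction d'une rep parabolique} n'énonce a priori que pour la réduction $\rho_{\alpha\beta}$. Il restera à s'assurer du petit fait géométrique que les points de $\bordinf\Es_0$ sont bien de régularité strictement minimale. Enfin, pour l'assertion finale, dans chacun des deux cas de (2) la représentation $\rho$ est \np{} dans un sous-espace convexe fermé stable $Y$ (à savoir $Y=\Es$ ou $Y=\Es_{\alpha\beta}$), et la proposition \ref{prop- np dans Y implique Minro non vide} fournit alors $\Minro\neq\vide$.
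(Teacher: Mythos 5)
Your proof is correct and takes essentially the same route as the paper, which states the corollary as an immediate consequence of Proposition \ref{prop- reduction d'une rep parabolique} (points (\ref{item- ss implique toute projection lui est conjuguee}) and the last one), of the remark that a non-parabolic action is completely reducible, and of \cite[Prop 2.7]{SerreCR}. The choice of a fixed point $\alpha$ of maximal regularity, the identification $\rho_{\alpha\beta}=\rho$ when $\beta$ is also fixed, and the appeal to Proposition \ref{prop- np dans Y implique Minro non vide} for the final assertion are exactly the intended arguments (compare the proof of Theorem \ref{thm- semisimplification et PGQS}, which uses the same maximal-regularity device).
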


\subsection{Cas des espaces symétriques}

On a maintenant obtenu l'équivalence des différentes caractérisations
géométriques remarquables suivantes, dans les espaces symétriques
(i.e. dans le cas des groupes réductifs réels, cf section \ref{s- hyp
  et notations groupes reductifs}).

\begin{prop}
\label{prop- carac ss dans ES}
On suppose que $G$ est un groupe réductif réel, agissant
sur son espace symétrique sans facteur compact $\Es$ associé.
Soit $\rho:\Ga\fleche G$ une représentation.
les conditions  suivantes sont équivalentes.   

\begin{enumerate}
\item
\label{item- Serre-cr a l'infini}
$\rho$ est \cred{} dans $\Es$.
\item 
\label{item- np dans Y Faisceau}
Ou bien $\rho$ est \np{} dans $\Es$, ou bien
$\rho$ fixe deux points à l'infini opposés $\alpha$ et $\beta$,
et est \np{} dans le faisceau $\Es_{\alpha,\beta}$.

\item \label{item- np dans Y}
$\rho$ stabilise un sous-espace convexe fermé $Y$ de $\Es$,
et est \np{} dans $Y$.

\item \label{item- Min(rho) non vide}
La fonction de déplacement $\dro: x\mapsto \sqrt{\sumS d(x,\rho(s)x)^2}$ 
atteint sa borne inférieure, autrement dit $\Minro$ est non vide.

\item 
\label{item- x symetrie des pts fixes a l'infini}
Il existe un point $x$ de $\Es$ tel que $\Fixinfro$  est symétrique
par rapport à $x$.
\end{enumerate}

\end{prop}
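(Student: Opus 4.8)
L'idée est de prouver l'équivalence des cinq conditions en bouclant le cycle d'implications $(\ref{item- Serre-cr a l'infini})\Rightarrow(\ref{item- np dans Y Faisceau})\Rightarrow(\ref{item- np dans Y})\Rightarrow(\ref{item- Min(rho) non vide})\Rightarrow(\ref{item- x symetrie des pts fixes a l'infini})\Rightarrow(\ref{item- Serre-cr a l'infini})$, en s'appuyant presque entièrement sur les résultats déjà démontrés. L'équivalence $(\ref{item- Serre-cr a l'infini})\Leftrightarrow(\ref{item- np dans Y Faisceau})$ n'est autre que le corollaire~\ref{coro- Serre-cr implique np dans Y faisceau}, valable pour tout groupe réductif sur un corps local, donc en particulier dans le cas réel~; le seul travail nouveau consiste à insérer les conditions (\ref{item- np dans Y}), (\ref{item- Min(rho) non vide}) et (\ref{item- x symetrie des pts fixes a l'infini}) dans la chaîne.

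Pour $(\ref{item- np dans Y Faisceau})\Rightarrow(\ref{item- np dans Y})$ il suffit d'exhiber le sous-espace $Y$~: dans la première alternative de (\ref{item- np dans Y Faisceau}) on prend $Y=\Es$, et dans la seconde $Y=\Es_{\alpha\beta}$. Le faisceau $\Es_{\alpha\beta}$ est un convexe fermé de $\Es$ (cf.~\no\ref{ss- def points oppposes et faisceaux CAT0})~; il est stable par $\rho$ car $\rho$ fixe à la fois $\alpha$ et $\beta$, donc $\rho(\Ga)\subset G_{\alpha\beta}$, qui préserve $\Es_{\alpha\beta}$. Dans les deux cas $\rho$ est \np{} dans $Y$ par hypothèse, d'où (\ref{item- np dans Y}).

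Les implications $(\ref{item- np dans Y})\Rightarrow(\ref{item- Min(rho) non vide})$ et $(\ref{item- Min(rho) non vide})\Rightarrow(\ref{item- x symetrie des pts fixes a l'infini})$ sont alors immédiates à partir des propositions ci-dessus~: la première est la proposition~\ref{prop- np dans Y implique Minro non vide}, et la seconde est l'énoncé de symétrie de la proposition~\ref{prop- Min(rho) non vide implique x symetrie des pts fixes a l'infini}. Pour invoquer cette dernière, on vérifie d'abord qu'un espace symétrique riemannien sans facteur compact (muni de son éventuel facteur euclidien provenant du centre) satisfait aux deux hypothèses permanentes de la proposition, à savoir l'unique extensibilité des géodésiques et l'absence de demi-bande plate~; les deux sont valables puisque $\Es$ est une variété de Hadamard analytique, comme rappelé avant la proposition.

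Enfin, $(\ref{item- x symetrie des pts fixes a l'infini})\Rightarrow(\ref{item- Serre-cr a l'infini})$ est l'étape portant le contenu effectif (quoique modeste), que l'on établit directement à partir des définitions. Fixons $x$ comme en (\ref{item- x symetrie des pts fixes a l'infini}) et soit $\alpha\in\Fixinfro$. Comme $\Es$ est à géodésiques uniquement extensibles, la géodésique passant par $x$ et asymptote à $\alpha$ a une extrémité opposée bien définie $\beta$, le symétrique de $\alpha$ en $x$~; cette géodésique joint $\alpha$ et $\beta$, donc $\beta$ est opposé à $\alpha$ au sens de \no\ref{ss- def points oppposes et faisceaux CAT0}. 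Par symétrie de $\Fixinfro$ par rapport à $x$, le point $\beta$ est lui aussi fixé par $\rho$. Ainsi tout point fixe à l'infini admet un point fixe opposé, ce qui est exactement la définition de la complète réductibilité, établissant (\ref{item- Serre-cr a l'infini}) et fermant le cycle. La difficulté principale ne réside donc pas dans cet assemblage mais est déjà absorbée dans la proposition~\ref{prop- Min(rho) non vide implique x symetrie des pts fixes a l'infini}, dont la preuve exploite la propriété d'absence de demi-bande plate propre aux espaces symétriques (et mise en défaut, par exemple, pour les arbres, cf.~\ref{sss- cex npY mais pas cr}).
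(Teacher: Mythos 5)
Votre démonstration est correcte et suit exactement le même schéma que celle de l'article : le cycle $(1)\Leftrightarrow(2)\Rightarrow(3)\Rightarrow(4)\Rightarrow(5)\Rightarrow(1)$, en invoquant le corollaire~\ref{coro- Serre-cr implique np dans Y faisceau}, la proposition~\ref{prop- np dans Y implique Minro non vide} et la proposition~\ref{prop- Min(rho) non vide implique x symetrie des pts fixes a l'infini}. Vous explicitez simplement un peu plus les implications que l'article qualifie de « claires » (stabilité et convexité du faisceau pour $(2)\Rightarrow(3)$, opposition du symétrique en $x$ pour $(5)\Rightarrow(1)$), ce qui est sans conséquence.
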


\begin{proof}
On vient de voir que (\ref{item- Serre-cr a l'infini}) équivaut à
(\ref{item- np dans Y Faisceau}) dans le cadre des groupes réductifs
(\cite{SerreCR}, cf. Corollaire \ref{coro- Serre-cr implique np dans Y
  faisceau}).
On a clairement que (\ref{item- np dans Y Faisceau}) implique
(\ref{item- np dans Y}), et que (\ref{item- x symetrie des pts fixes a
  l'infini}) implique (\ref{item- Serre-cr a l'infini}).
L'implication {(\ref{item- np dans Y}) 
$\Rightarrow$ (\ref{item- Min(rho) non vide})} 
est vraie dans tout espace métrique $\CAT0$ propre
$\Es$ (proposition \ref{prop- np dans Y implique Minro non vide}).
L'implication  {(\ref{item- Min(rho) non vide})
$\Rightarrow$ (\ref{item- x symetrie des pts fixes a l'infini})} 
est vraie, pour tout $x$ de $\Minro$, dans le cadre plus général où
$\Es$ est à géodésiques uniquement extensibles et sans demi-bande
plate (proposition \ref{prop- Min(rho) non vide implique x symetrie des pts
  fixes a l'infini}).
\end{proof}

\begin{rema*} On peut démontrer que  l'ensemble minimal $\Minro$ est ici
  un sous-espace totalement géodésique (car si $\dro$ est
  constante sur un segment géodésique $\sigma[0,1]$, alors on montre
  aisément que $d_{\rho(s)}$ aussi pour tout $s$ de $S$) sur lequel le
  centralisateur $Z(\rho)$ de $\rho$ agit transitivement (car le
  transport parallèle le long de $\sigma$ - prolongée à $\RR$ - est
  dans $Z(\rho)$). On peut alors en déduire les propriétés
  suivantes, analogues aux  résultats de base de la théorie de
  l'application moment (voir par exemple \cite{RiSl}) : si on note
  $\mathcal{M}$ le sous-ensemble des $\rho\in\Rep$ telles que $\dro$
  atteint sa borne inférieure en $\xo$, alors $\rho$ est cr si et
  seulement si $G\cdot \rho$ rencontre $\mathcal{M}$, et on a alors
  $\mathcal{M}\cap G\cdot\rho=K\cdot \rho$, où $K$ est le sous-groupe
  compact $\Stab_G(\xo)$.
\end{rema*}
\section{Séparation et espace quotient}

\label{s- separation}

On se place dorénavant dans le cadre où $G$ est un groupe réductif sur
un corps local agissant sur son espace (espace symétrique ou immeuble
affine) associé (notations et hypothèses de la section \ref{s- hyp et
  notations groupes reductifs}).
Dans cette section on montre (théorème \ref{thm- semisimplification et
  PGQS}) que l'espace $\Xcr$ des classes de représentations cr est le
plus gros quotient séparé de $\Rep=\Hom(\Ga,G)\subset G^S$ sous
l'action (par conjugaison) de $G$.

On note $\Rcr$ le sous-espace formé des représentations \creds, et
$\Xcr=\Rcr/G$ l'espace topologique quotient.
 Notons que l'espace $\Xcr$ est à base dénombrable (car $\Rep$ l'est).
On dira que deux points $x$ et $x'$ d'un espace topologique $E$ sont
{\em séparés} par une action continue de $G$ dans $E$ s'il existe deux
voisinages $V$ et $V'$ de $x$ et $x'$ dont les orbites $G \cdot V$ et
$G \cdot V'$
ne se rencontrent pas. Sinon, on dira que $x$ et $x'$ sont {\em
$G$-voisins dans $E$} (on notera que ce n'est pas {\em a priori}
 une relation d'équivalence).

\subsection{Séparation des orbites cr}
\label{ss- separation}
Nous allons ici démontrer le ré\-sul\-tat de sé\-pa\-ra\-tion des orbites cr
suivant, qui est une variante (contenant l'essentiel) du théorème
\ref{thm- semisimplification et PGQS} (et donc connu en caractéristique
nulle, voir les remarques suivant le théorème \ref{thm-
  semisimplification et PGQS}).
On rappelle (cf. section \ref{ss- cr dans les groupes reductifs}) 
qu'une {\em réduction} d'une représentation $\rho$ désigne une
représentation de la forme $\sigma=p_{\alpha\beta}\circ \rho$ où $p_{\alpha\beta} :P_\alpha\fleche G_{\alpha\beta}$ est la
projection canonique associée à deux points à l'infini opposés
$\alpha$ et $\beta$ avec $\alpha$ fixé par $\rho$. 

\begin{theo}
\label{theo- Rcr sur G separe}
\label{theo- orbites voisines}
Soient $\rho$ et $\rho'$ deux points $G$-voisins dans $\Rep$.
Alors 

\begin{enumerate}

\item $\rho$ et $\rho'$ possèdent deux réductions 
conjuguées dans  $G$.
\item Les adhérences de leurs orbites se rencontrent ($\ov{G \cdot
    \rho}\cap \ov{G \cdot \rho'} \neq \es$).
\item Si de plus  $\rho$ et $\rho'$ sont cr,
  alors elles sont conjuguées (${G \cdot \rho} = {G \cdot
    \rho'}$).
 \end{enumerate}

En particulier, l'espace topologique quotient $\Xcr=\Rcr/G$ est séparé.  
\end{theo}

Pour démontrer ce théorème, on commence par  le petit lemme suivant,
qui montre que, comme ``les
parties compactes ne comptent pas'', on peut se ramener, grâce à un
analogue de la dé\-com\-po\-si\-tion de Cartan, à l'action d'une suite
$I$-fondamentale de $A$.

\begin{lemm}
\label{lemm- reduction a une suite fondamentale}
On considère une action continue de $G$ sur un espace topologique $E$ à base
dénombrable. Soient $x, x'$ dans $E$.
Les assertions suivantes sont équivalentes.
\begin{enumerate}
\item Les points $x$ et $x'$  sont $G$-voisins dans $E$.

\item il existe $h,k$ dans $G$, 
une partie  $I$ de $\Lambda$, 
une suite $I$-fondamentale $(a_i)_i$ de $A$, 
et une suite $(y_i)_i$ dans $E$, tels que $y_i \tend y$ et 
$a_i \cdot y_i \tend y'$ avec $y= h \cdot x$ et $y'= k \cdot x'$ quand $i\tend \pinfty$.

\end{enumerate}
\end{lemm}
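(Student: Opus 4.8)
The plan is to prove the two implications separately. The implication $(2)\Rightarrow(1)$ is elementary bookkeeping: it only transports the two convergences through the fixed elements $h$ and $k$. The real content is $(1)\Rightarrow(2)$, where I use a Cartan-type decomposition to split each conjugating element into a compact part (which converges after extraction) and a translation part that I can normalize into an $I$-fundamental sequence.

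For $(2)\Rightarrow(1)$, suppose we are given $h,k$, a subset $I\subset\Lambda$, an $I$-fundamental sequence $(a_i)_i$ and a sequence $(y_i)_i$ with $y_i\tend y=h\cdot x$ and $a_i\cdot y_i\tend y'=k\cdot x'$. Let $V$ and $V'$ be arbitrary neighborhoods of $x$ and $x'$. Then $h\cdot V$ is a neighborhood of $y$ and $k\cdot V'$ a neighborhood of $y'$, so for $i$ large enough $y_i\in h\cdot V\subset G\cdot V$, whence $a_i\cdot y_i\in G\cdot V$ as well, while simultaneously $a_i\cdot y_i\in k\cdot V'\subset G\cdot V'$. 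Thus $a_i\cdot y_i\in G\cdot V\cap G\cdot V'\neq\es$; since $V,V'$ were arbitrary, $x$ and $x'$ are $G$-voisins.

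For $(1)\Rightarrow(2)$, I use that $E$ has a countable basis to pick decreasing neighborhood bases $(V_i)_i$ of $x$ and $(V'_i)_i$ of $x'$. Being $G$-voisins gives $G\cdot V_i\cap G\cdot V'_i\neq\es$ for every $i$, which, after multiplying by a suitable group element, yields $z_i\in V_i$ and $g_i\in G$ with $g_i\cdot z_i\in V'_i$; hence $z_i\tend x$ and $g_i\cdot z_i\tend x'$. The whole difficulty is to control $(g_i)_i$, and this is the main obstacle. I would invoke the Cartan decomposition $G=K\,A\,K$ (with $K=\Stab_G(\xo)$ compact and the middle factor chosen so that its translation vector lies in the closed chamber $\Cb$), writing $g_i=p_i b_i q_i$ with $p_i,q_i\in K$ and $\nu(b_i)\in\Cb$. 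As $K$ is compact and metrizable, after extraction $p_i\tend p$ and $q_i\tend q$. For each simple root $\varphi\in\Lambda$ the scalars $\varphi(\nu(b_i))\geq 0$, so after a further extraction I set $I=\{\varphi\in\Lambda : \varphi(\nu(b_i))\text{ bounded}\}$, ensuring $\varphi(\nu(b_i))\tend\pinfty$ for $\varphi\notin I$. Decomposing $\nu(b_i)=v_i^I+v_{i,I}$ along $\Aa=\Aa^I\oplus\Aa_I$, the roots of $I$ vanish on $\Aa_I$ and are nondegenerate on $\Aa^I$, so boundedness of $\varphi(\nu(b_i))$ for $\varphi\in I$ forces $v_i^I$ to stay bounded. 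Using that $\nu(A_I)$ is cocompact in $\Aa_I$, I choose $a_i\in A_I$ with $\nu(a_i)-v_{i,I}$ bounded; then $\nu(a_i)=0$ on the roots of $I$ and $\varphi(\nu(a_i))=\varphi(\nu(b_i))+O(1)\tend\pinfty$ for $\varphi\notin I$, so $(a_i)_i$ is $I$-fundamental after discarding finitely many terms. Moreover $c_i:=b_i a_i^{-1}\in A$ has bounded translation vector, hence (as $\ker\nu$ is compact) remains in a compact part of $A$.

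It remains to reassemble. Since $A$ is abelian, $b_i=a_i c_i$, so $g_i=p_i a_i c_i q_i$ and, setting $y_i:=c_i q_i\cdot z_i$, we get $g_i\cdot z_i=p_i\cdot(a_i\cdot y_i)$. After extracting $c_i\tend c$ in $A$, continuity of the action gives $y_i\tend (cq)\cdot x=:y$ and $a_i\cdot y_i=p_i^{-1}\cdot(g_i\cdot z_i)\tend p^{-1}\cdot x'=:y'$; taking $h=cq$ and $k=p^{-1}$ is exactly assertion $(2)$. The only facts needing care are the compactness of $\ker\nu$ and the cocompactness of $\nu(A_I)$ in $\Aa_I$ (both recorded in Section \ref{s- hyp et notations groupes reductifs}), and the Cartan decomposition with vector part in $\Cb$.
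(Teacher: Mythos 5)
Your proof is correct and follows essentially the same route as the paper's: extract sequences $z_i\tend x$, $g_i\cdot z_i\tend x'$, split $g_i$ via a Cartan-type decomposition at $\xo$, let $I$ be the set of simple roots bounded on the translation parts, and absorb the bounded leftover factors into $h$ and $k$ after extraction. The only (immaterial) difference is that you invoke the group-level decomposition $G=KA^+K$, whereas the paper only uses that $\Cb$ is a fundamental domain for $K=\Stab_G(\xo)$ acting on $\Es$ and deduces that the remaining factor $h_i=a_i^{-1}k_i^{-1}g_i$ is bounded from the properness of the action.
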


\begin{proof}
Si $x$ et $x'$ sont $G$-voisins, il existe une suite $(x_i)_i$ dans $E$
 et une suite $(g_i)_i$ dans $G$ telles que 
$x_i \tend x$ et $g_i \cdot x_i \tend x'$ quand $i\tend \pinfty$.
Comme $\Cb$ est un domaine fondamental pour l'action du sous-groupe
compact $K=\Stab_G(\xo)$ sur $\Es$,
il existe $k_i\in K$ tel que  $g_i \xo=k_i v_i$ avec $v_i$ dans $\Cb$.
Soit $I$ l'ensemble des racines simples $\varphi$ de $\Lambda$
non bornées sur la suite $(v_i)$.  La suite des projections $v^I_i$ de $v_i$
sur $\Aa^I$ est  alors bornée.  La suite des projections $v_{i,I}$ de $v_i$
sur $\Aa_I$ est à distance bornée d'une suite $u_i=a_i  \xo$, avec
$a_i\in A_I$ (car $A_I$ agit cocompactement sur $\Aa_I$). 
On note $g_i=k_ia_ih_i$.
La suite $(h_i)_i$ est bornée dans $G$ car la suite
$h_i\xo=a_i^{-1}v_i$ du plat $\Aa$ a ses composantes dans
$\Aa^I\oplus\Aa_I$ bornées.
On a donc, quitte à extraire, que $k_i$ tend vers $k^{-1}$ et $h_i$
tend vers $h$ dans $G$, et la suite $y_i = h_i \cdot x_i$ convient.
L'autre sens est clair.
\end{proof}

On  décrit maintenant les limites de conjugués par une suite
$I$-fondamentale $(a_i)_i$ de $A$.
La preuve repose sur la décomposition $\Umoins_I{}G_I \Uplus_I{}$ (cf
prop. \ref{prop- decomposition NRN}).

\begin{prop}[Action d'une suite $I$-fondamentale dans $A$.]
\label{prop- conjugaison par une suite I-fondamentale dans A}
Soit $I$ une partie de $\Lambda$ et $(a_i)_i$ une suite $I$-fondamentale  dans $A$.
Soient deux représentations $\rho$ et $\rho'$ de $\Rep$, et une suite
$(\rho_i)_i$ convergeant vers $ \rho$ dans $\Rep$, 
telles que la suite des conjugués $(a_i \cdot \rho_i)_i$ tend vers $ \rho'$. 
Alors

\begin{enumerate}
 
\item 
\label{i- paraboliques}
La représentation $\rho'$ est (d'image) incluse dans le
  sous-groupe parabolique $P_I^+$, 
et $\rho$ est (d'image) incluse dans
 le sous-groupe parabolique opposé $P_I^-$.

\item \label{i- r=r'}
Les projections $r$ et $r'$ de $\rho$ et $\rho'$ sur le
  sous-groupe de Levi commun $G_I$ sont
  égales. 

\item  
\label{item- detail}
Soit $\rho=\umoins{} r$ la décomposition de $\rho$ suivant la
décomposition $P_I^-=\Umoins_I G_I$ et $\rho'=r \upluss{}'$ la
décomposition de $\rho'$ suivant la décomposition $P_I^+ = G_I
\Uplus_I$.  Alors, à partir d'un certain rang, on a
$\rho_i=\umoins{}_i r_i \upluss{}_i $, pour des suites $r_i\tend r$ dans
$(G_I)^\Ga$, et  $\umoins_i \tend \umoins$
  dans $(\Umoins_I)^\Ga$, et enfin   $\upluss{}_i =a_i^{-1} \cdot
\upluss{}'_i $ où $\upluss'_i \tend \upluss'$ dans
  $(\Uplus_I)^\Ga$.
\end{enumerate}
\end{prop}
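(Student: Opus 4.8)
The plan is to argue generator by generator: fixing $s\in S$ and writing $h_i=\rho_i(s)$, we have $h_i\tend h:=\rho(s)$ while $a_i\cdot h_i\tend h':=\rho'(s)$, and I would run everything through the open decomposition $\calO_I=\Umoins_I G_I\Uplus_I$ of Proposition~\ref{prop- decomposition NRN} together with the contraction estimates of Proposition~\ref{prop- contraction par conjugaison forte}: conjugation by the $I$-fundamental sequence $(a_i)$ contracts $\Umoins_I$ and expands $\Uplus_I$, in the sense that $a_i\cdot\umoins_i\tend 1$ for any bounded sequence $(\umoins_i)$ in $\Umoins_I$ and $a_i^{-1}\cdot\upluss_i\tend 1$ for any bounded $(\upluss_i)$ in $\Uplus_I$. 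The first task (point~\ref{i- paraboliques}) is to place the two limits in the opposite parabolics, $\rho(\Ga)\subseteq P_I^-$ and $\rho'(\Ga)\subseteq P_I^+$.

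To prove these containments cleanly, sidestepping the fact that the chart above is defined only on $\calO_I$, I would fix a faithful $\KK$-rational embedding $\Gb\hookrightarrow\GL(V)$ diagonalizing $\Ab$, with weight decomposition $V=\bigoplus_\chi V_\chi$. Pairing weights against the direction of $\Cc_I$ identifies $P_I^+$ and $P_I^-$ with the stabilizers of the two opposite weight filtrations, while $G_I$ preserves the grading; conjugation by $a_i$ scales the $\Hom(V_{\chi'},V_\chi)$ block by $\chi(a_i)\chi'(a_i)^{-1}$, whose absolute value tends to $\pinfty$, equals $1$, or tends to $0$ according to the sign of $(\chi-\chi')$ on $\Cc_I$. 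Since $a_i h_i a_i^{-1}\tend h'$ is bounded, the strictly expanding blocks of $h_i$ must tend to $0$, so $h$ annihilates them and lies in $P_I^-$; the symmetric argument applied to $a_i^{-1}$ gives $h'\in P_I^+$. (Alternatively one can start from Lemma~\ref{lemm- existence de points fixes au bord}, noting that $a_i^{-1}\xo$ accumulates at a $\rho$-fixed point of $\ov{c_I^-}$, and then sharpen.)

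Once $\rho(\Ga)\subseteq P_I^-\subseteq\calO_I$ is known, openness of $\calO_I$ gives $h_i\in\calO_I$ for $i$ large, so I may write $h_i=\umoins_i r_i\upluss_i$ with, by continuity of the inverse chart, $\umoins_i\tend\umoins$, $r_i\tend r$ and $\upluss_i\tend 1$, where $h=\umoins r$. Conjugating, $a_i\cdot h_i=(a_i\cdot\umoins_i)(a_i r_i a_i^{-1})(a_i\cdot\upluss_i)$ with $a_i\cdot\umoins_i\tend 1$; comparing with the decomposition $h'=r'\upluss'$ of the limit in $\calO_I$ and invoking uniqueness of the factorization yields $a_i r_i a_i^{-1}\tend r'$ and $a_i\cdot\upluss_i\tend\upluss'$. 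Because $\nu(a_i)\in\Aa_I$, the element $a_i$ acts on $G_I$ through a relatively compact conjugation --- trivially when $a_i\in A_I$, by centrality of $A_I$ in $G_I$ --- so that $\lim a_i r_i a_i^{-1}=\lim r_i=r$, which is $r=r'$ (point~\ref{i- r=r'}).

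Assembling these convergences then gives point~\ref{item- detail} essentially verbatim: eventually $\rho_i=\umoins_i r_i\upluss_i$, with $\umoins_i\tend\umoins$ in $(\Umoins_I)^\Ga$, $r_i\tend r$ in $(G_I)^\Ga$, and $\upluss_i=a_i^{-1}\cdot\upluss'_i$ where $\upluss'_i:=a_i\cdot\upluss_i\tend\upluss'$ in $(\Uplus_I)^\Ga$. I expect the main obstacle to be point~\ref{i- paraboliques}, namely showing that the expanding unipotent component genuinely vanishes in the limit (equivalently, eventual membership in $\calO_I$): this needs the expanding and contracting directions to be controlled \emph{simultaneously}, not factor by factor. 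The second delicate point is the exact matching of the two Levi parts in point~\ref{i- r=r'}, which rests entirely on $a_i$ acting on $G_I$ by a bounded --- ideally trivial --- conjugation, so that the $G_I$-factor of $h_i$ has the same limit before and after conjugation by $a_i$.
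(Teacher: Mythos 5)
Your treatment of points (\ref{i- r=r'}) and (\ref{item- detail}) — reduction to a single generator, the open cell $\calO_I=\Umoins_I G_I\Uplus_I$ with uniqueness and continuity of the factorization, the contraction estimates, and centrality of $A_I$ in $G_I$ — coincides with the paper's. (One small caveat you share with the paper: an $I$-fundamental sequence is only required to lie in $A$ with $\nu(a_i)\in\Cc_I$, and a ``relatively compact conjugation'' would only give $r'=c\cdot r$ for some accumulation point $c$ of the bounded parts of the $a_i$; the argument really needs $a_i\in A_I$, which is what the lemma feeding this proposition actually produces.) The genuine divergence from the paper, and the gap, is in point (\ref{i- paraboliques}).

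Your trichotomy for the blocks $\Hom(V_{\chi'},V_\chi)$ is not exhaustive: a difference $\chi-\chi'$ of weights of an arbitrary faithful $V$ is not a root, so its sign need not be constant on $\Cc_I$, and for such blocks the $I$-fundamental hypothesis does not determine the behaviour of $\abs{\chi(a_i)\chi'(a_i)^{-1}}$. What your argument does prove is that $h_{\chi\chi'}=0$ for every block with $\chi-\chi'$ non-negative on $\ov{\Cc_I}$ and not identically zero there (such a character is a non-negative combination of the $\varphi|_{\Aa_I}$, $\varphi\in\Lambda\setminus I$, with some positive coefficient, hence diverges along $\nu(a_i)$). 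But $P_I^-$ as the stabilizer of a weight filtration is attached to one cocharacter $\lambda\in-\Cc_I$ and requires the vanishing of the strictly larger set of blocks with $(\chi-\chi')(-\lambda)>0$. The discrepancy is exactly the difficulty you gesture at but misdiagnose at the end: the direction of $\nu(a_i)$ may degenerate into a proper face $\Cc_L$ of $\ov{\Cc_I}$ with $L\supsetneq I$, and the naive ``bounded conjugates'' argument then only places $h$ and $h'$ in the larger parabolics attached to that face. The paper closes precisely this gap with its ``choix de $J$ minimal'' step, reapplying the fixed-point-at-infinity lemma to the Levi component $r'$ and the component $v^J$ of $\nu(a_i)$ to force $J=I$. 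In your framework the repair is to run the block argument in the adjoint representation and to characterize $\Pb_I^-$ as the normalizer of $\goth{p}_I^-=\sqg_0\oplus\bigoplus_{\varphi\in\Phi\setminus\Phi_I^+}\sqg_\varphi$: every block carrying $\goth{p}_I^-$ outside itself has $\chi-\chi'$ non-negative on $\ov{\Cc_I}$, so it is covered by your divergence argument. As written, however, the inference ``so $h$ \ldots{} lies in $P_I^-$'' is unjustified, and the ``then sharpen'' of your parenthetical alternative is exactly the missing content.
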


Remarquons que le point \ref{item- detail} décrit complètement la
situation. En effet, les propriétés de contraction de la conjugaison
par la suite $(a_i)$ sur $\Umoins_I$ et $\Uplus_I$ (proposition
\ref{prop- contraction par conjugaison forte}) permettent de voir qu'on a
la réciproque suivante 
(attention, ici $\rho_i :\Ga\fleche G$ n'a  pas de raisons a priori
d'être un morphisme).

\begin{prop}
\label{prop- rec}
Soient $\rho=\umoins{} r$ une représentation à valeurs dans $P_I^- =
\Umoins_I G_I$ et $\rho' = r \upluss{}'$ une représentation à valeurs
dans $P_I^+ = G_I \Uplus_I$, de même projection $r$ sur $G_I$.  
Pour toute suite $I$-fondamentale
$(a_i)$ de $A_I$, si $\rho_i=\umoins_i r_i \upluss_i $ avec $r_i\tend
r$ dans $(G_I)^\Ga$ et $\umoins_i \tend \umoins$ dans
$(\Umoins_I)^\Ga$ quelconques, et $\upluss_i =a_i^{-1} \cdot \upluss'_i
$ avec $\upluss'_i \tend \upluss'$ dans $(\Uplus_I)^\Ga$ quelconque, alors on
a
$\rho_i\tend \rho$  et $a_i \cdot \rho_i\tend \rho'$.
En particulier, $\rho$ et $\rho'$ sont $A$-voisines.
\cqfd
\end{prop}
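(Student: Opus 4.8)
Le plan est de vérifier directement les deux convergences annoncées, en exploitant le fait que la conjugaison par $a_i$ est un automorphisme, qui agit donc facteur par facteur sur la décomposition $\Umoins_I G_I \Uplus_I$ (proposition \ref{prop- decomposition NRN}), et que les contractions de la proposition \ref{prop- contraction par conjugaison forte} écrasent $\Uplus_I$ quand on conjugue par $a_i^{-1}$ et $\Umoins_I$ quand on conjugue par $a_i$. On raisonne ponctuellement sur chaque $\ga\in\Ga$ : les $\rho_i$ ne sont pas nécessairement des morphismes, mais la topologie est celle de la convergence simple et le produit comme la conjugaison sont continus, de sorte que ce point ne crée aucune difficulté. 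On note aussi que, comme $A_I\subset A$, une suite $I$-fondamentale de $A_I$ est $I$-fondamentale dans $A$, ce qui permet bien d'appliquer la proposition \ref{prop- contraction par conjugaison forte}.

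Je commencerais par la convergence $\rho_i\tend\rho$. La suite $(\upluss'_i)_i$ étant convergente, donc bornée, la proposition \ref{prop- contraction par conjugaison forte} donne $\upluss_i=a_i^{-1}\cdot\upluss'_i\tend 1$ dans $(\Uplus_I)^\Ga$. Par continuité du produit, on obtient alors $\rho_i=\umoins_i\, r_i\, \upluss_i\tend \umoins\, r=\rho$.

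Pour la convergence $a_i\cdot\rho_i\tend\rho'$, j'écrirais $a_i\cdot\rho_i=(a_i\cdot\umoins_i)(a_i\cdot r_i)(a_i\cdot\upluss_i)$. Le facteur intermédiaire est fixé par la conjugaison, car $A_I$ est central dans $G_I$, d'où $a_i\cdot r_i=r_i\tend r$. La suite $(\umoins_i)_i$ étant bornée, la contraction donne $a_i\cdot\umoins_i\tend 1$. Enfin, par définition de $\upluss_i$, on a $a_i\cdot\upluss_i=a_i\cdot(a_i^{-1}\cdot\upluss'_i)=\upluss'_i\tend\upluss'$. D'où $a_i\cdot\rho_i\tend r\,\upluss'=\rho'$.

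Pour l'assertion finale, il suffit d'exhiber de telles suites, par exemple en prenant les facteurs constants $\umoins_i=\umoins$, $r_i=r$ et $\upluss'_i=\upluss'$ : on obtient $\rho_i\tend\rho$ et $a_i\cdot\rho_i\tend\rho'$ avec $a_i\in A$, de sorte qu'aucun couple de voisinages de $\rho$ et de $\rho'$ ne peut être séparé par $A$ ; autrement dit $\rho$ et $\rho'$ sont $A$-voisines. Il n'y a pas ici d'obstacle réel : c'est la réciproque immédiate de la proposition \ref{prop- conjugaison par une suite I-fondamentale dans A}, et le seul point méritant attention est de contracter dans les deux directions opposées ($a_i$ sur $\Umoins_I$, $a_i^{-1}$ sur $\Uplus_I$) et d'invoquer la centralité de $A_I$ pour le facteur $G_I$.
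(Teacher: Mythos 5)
Votre démonstration est correcte et suit exactement la voie que l'article sous-entend : la proposition y est énoncée sans preuve rédigée (le \cqfd suit immédiatement l'énoncé, l'article se contentant de renvoyer aux propriétés de contraction de la proposition \ref{prop- contraction par conjugaison forte}), et vous explicitez précisément cet argument — contraction de $\Uplus_I$ par $a_i^{-1}$ et de $\Umoins_I$ par $a_i$ (les suites étant bornées car convergentes), centralité de $A_I$ dans $G_I$ pour le facteur médian, puis choix de suites constantes pour le $A$-voisinage. Rien à redire.
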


\begin{proof}[Preuve de la proposition \ref{prop- conjugaison par une suite
      I-fondamentale dans A}]
Il suffit de  prouver ces points ``terme à terme'', c'est-à-dire  pour un élément
  fixé $\gamma$ de $\Gamma$. On note alors $g=\rho(\gamma)$ et
  $g'=\rho'(\gamma)$.
On commence par projeter, en utilisant une décomposition
$\UmoinsJ{}G_J\UplusJ{}$, dans un $G_J$, a priori plus gros que $G_I$,
tel que  $P_J^+$ contient $g'$ et $P_J^-$ contient  $g$.
  Puis on montre qu'en fait $J=I$ par un argument de minimalité, d'où
  le résultat pour le bon $I$.

{\em  Existence d'une facette  à l'infini fixée ``commune''.}
On note $v_i=\nu(a_i)$ (vecteur  de la translation de $\Aa$ 
réalisée par $a_i$).
Quitte à extraire, on peut supposer que $v_i$ tend vers un point $v$
dans le bord à l'infini de la facette $\Cb_I$. Soit $\Cc_L$ la facette
ouverte de $\Cc_I$ contenant $v$. Alors $g'$ fixe la facette à
l'infini $\bordinf \Cc_L$ (d'après le lemme \ref{lemm- existence de
  points fixes au bord}).
De même, en remplaçant $v_i$ par $-v_i$ qui tend vers $-v$, on voit
que $g$ fixe le bord à l'infini de la facette opposée $\Cc_L^-=-\Cc_L$.

{\em Choix de $J$ minimal.}
On suppose désormais que $\Cc_J$ est une facette maximale (pour l'inclusion)
parmi les facettes de $\Cc_I$  telle que  $g'$ fixe $\bordinf\Cc_J$ (c'est-à-dire  
$g'\in P_J^+$) et  $g$ fixe $\bordinf(\Cc_J^-)$  (c'est-à-dire 
$g\in P_J^-$). 

Soit $g=\umoins{} r$ la décomposition de $g$ suivant la
décomposition $P_J^-=\Umoins_J G_J$ et $g'=r \upluss{}'$ la
décomposition de $g'$ suivant la décomposition $P_J^+ = G_J
\Uplus_J$.

{\em Projection sur $G_J$ via la décomposition $ \UmoinsJ{} G_J \UplusJ{}$.}
On con\-si\-dè\-re maintenant la décomposition $\calO_J=\UmoinsJ{} G_J
\UplusJ{}$ de la proposition \ref{prop- decomposition NRN}.
Comme $\calO_J$ est un ouvert qui contient $P_J^-$ et $P_J^+$, donc $g$ et $g'$,
les suites $g_i$ et $g'_i$ sont dans $\calO_J$ à partir d'un
certain rang.
On a donc les décompositions
$g_i=\umoins{}_i r_i  \upluss{}_i $ et
$g'_i =\umoins{}_i'  r'_i  \upluss{}_i' $, 
où $\umoins{}_i$ et  $\umoins{}_i'$ sont dans $\UmoinsJ{}$,
$\upluss{}_i$ et $\upluss{}_i'$ sont dans $\UplusJ{}$, 
et $r_i$ et $r'_i$ sont dans $G_J$.
La décomposition $\UmoinsJ{} G_J
\UplusJ{}$ étant unique, continue, et conservée par la conjugaison par
$a_i$, on a que $r_i\tend r$ et que
$r'_i =a_i\cdot r_i\tend r'$.
De même, on a que $\umoins_i\tend\umoins$ et que
$\upluss_i'=a_i\cdot\upluss_i\tend\upluss'$.

{\em Montrons enfin que $I=J$.}
Notons $v_i=v_i^J+v_{i,J}$ la décomposition de $v_i$ suivant la somme
orthogonale $\Aa=\Aa^J \oplus \Aa_J$.
Si $J\neq I$, alors $v_i^J$ n'est pas borné : 
en effet il existe $\varphi\in J-I$, et $\varphi(v_i^J)=\varphi(v_i)$
tend vers $\pinfty$, car la suite $v_i$ est $I$-fondamentale.
Quitte à extraire, on a donc que $v_i^J$ tend vers un point $v^J$ dans le bord à
l'infini  de $\Aa^J$.
Comme $A_J$ agit cocompactement sur $\Aa_J$, il existe $b_i\in A_J$
tel que $v_{i,J}-\nu(b_i)$ reste borné.
Notons $a_i'=a_ib_i^{-1}$, alors $a'_i\xo$ tend encore vers $v$.
Comme $A_J$ est central dans $G_J$, on a que $a_i'.r_i = a_i.r_i$, qui
converge vers $r'$.
On en déduit que $r'$ fixe $v^J$ 
(par le lemme \ref{lemm- existence de points fixes au
  bord}).
Comme $\varphi(v^J)\geq 0$ pour
toutes les racines $\varphi$ de $J$ et qu'on a $\varphi(v^J)>0$ pour au
moins une racine $\varphi$ de $J$ (car $v^J\notin\Aa_J$), on a
que le cône $\RR^+v^J\oplus\Aa_J$
rencontre une facette ouverte $\Cc_{J'}$ dominant strictement la
facette $\Cc_J$.
Comme $r'$ fixe tous les points du bord à l'infini de $\RR^+v^J+\Aa_J$,
il fixe la facette à l'infini $\bordinf\Cc_{J'}$. Donc $g'$ fixe
également $\bordinf\Cc_{J'}$ (car $U_J$  fixe
$\bordinf\Cb$).
On voit de même, en remplaçant $a_i'$ par $(a_i')^{-1}$
que $g$ fixe le bord à l'infini de la facette opposée $\Cc_{J'}^-$ (car
$(a_i')^{-1}\xo \tend -v^J$).
Ce qui contredit l'hypothèse ``$\Cc_J$ maximale'' faite ci-dessus,
car $\Cc_{J'}$ domine strictement la facette $\Cc_J$.
On a  donc en fait  $r'_i = r_i$ pour tout $i$ (car $a_i\in A_J$
central dans $G_J$), d'où $r'=r$.
\end{proof}

\begin{proof}[Preuve du théorème \ref{theo- Rcr sur G separe}]
  Comme les réductions d'une représentation sont dans l'ad\-hé\-ren\-ce de
  son orbite (cf
  proposition \ref{prop- la projection est limite de conjugaisons}) et
  qu'une représentation \cred{} est conjuguée à toutes ses réductions
  (cf proposition \ref{prop- reduction d'une rep parabolique}, point
  \ref{item- ss implique toute projection lui est conjuguee}), il
  suffit de voir le premier point.
  Quitte à conjuguer $\rho$ et $\rho'$, on peut supposer qu'il existe
  une suite $I$-fondamentale $(a_i)$ dans $A$ et une suite $\rho_i$
  dans $\Rep$ convergeant vers $\rho$ telles que $a_i \cdot \rho_i$
  converge vers $\rho'$ (lemme \ref{lemm- reduction a une suite
    fondamentale}). Les points (\ref{i- paraboliques}) et (\ref{i-
    r=r'}) de la proposition \ref{prop- conjugaison par une
    suite I-fondamentale dans A} permettent alors de conclure.
\end{proof}

\subsection{Semisimplification et plus gros quotient séparé}
Soient $\rho$ et $\rho'$ dans $\Rep$. 
D'après le théorème \ref{theo- orbites voisines}, $\rho$ et $\rho'$ sont $G$-voisines si et seulement si  
$\ov{G \cdot \rho}\cap \ov{G \cdot \rho'}\neq \es$. On note dans ce cas
$\rho\sim\rho'$, et il est facile de voir que $\sim$ est alors une
relation d'équivalence. On note $\Rep//G=\Rep/\sim$ l'espace topologique
quotient, et $\psep:\Rep\fleche \Rep//G$ la projection correspondante (qui
passe au quotient en une application continue surjective 
$\ov \psep : \Rep/G \fleche \Rep//G$). 

\begin{theo}
\label{thm- semisimplification et PGQS}

\begin{enumerate}

\item 
\label{item- semisimplification}
Pour tout $\rho$ de $\Rep$, l'adhé\-rence de  $G \cdot
\rho$ contient une unique orbite \cred.

\medskip

On note $\pi :\Rep \fleche \Xcr$ la projection $G$-invariante associée
({\em semisimplification}).

\item
\label{item- plus gros quotient separe}
L'application $\pi$ est continue
et induit un homéomorphisme de $\Rep//G$ sur $\Xcr$.
En particulier, $\Rep//G$ (resp. $\Xcr$) est le plus gros quotient séparé de
$\Rep$ sous $G$  (toute application continue $G$-invariante $f$ de
$\Rep$ vers un espace séparé factorise à travers $\psep$ (resp. $\pi$)).

\end{enumerate}
\end{theo}

\begin{remas*}
Le point \ref{item- semisimplification} est prouvé dans \cite{Richardson}
pour $\KK=\RR$.
Par ailleurs, un résultat analogue, mais en remplaçant les orbites cr
par les orbites fermées, et pour des actions plus générales de
groupes réductifs, est prouvé, pour
$\KK=\RR$, dans \cite{Luna} et \cite{RiSl}, 
et plus généralement, pour $\KK$ de
caractéristique nulle; dans \cite[5.4 et 5.11]{Bremigan}.
Il implique alors le résultat ci-dessus quand en utilisant que les orbites
fermées sont exactement les orbites \creds{} (\cite{Richardson} pour
$\KK=\RR$, et, plus généralement pour $\KK$ de caractéristique
nulle, on peut le déduire de \cite{Richardson}
et \cite{Bremigan}).
\end{remas*}

\begin{proof}
Pour le point \ref{item- semisimplification} :
  En projetant sur un sous-groupe de Levi correspondant à un point
  fixe de $\rho$ dans $\bordinf\Es$ de régularité maximale, on obtient
  bien une représentation \cred{} adhérente à l'orbite de $\rho$ par
  la proposition \ref{prop- reduction d'une rep parabolique} et le
  corollaire \ref{coro- np dans Y faisceau implique Serre-cr}.
L'unicité à conjugaison près découle de la séparation des orbites cr
(théorème \ref{theo- Rcr sur G separe}).

Pour le point \ref{item- plus gros quotient separe},
il s'agit essentiellement de montrer la continuité de $\pi$. 
Les arguments suivants sont inspirées par des idées de Maxime Wolff 
\cite[2.2.6]{Wolff}.
Comme $\Rep$ et $\Xcr$ sont  à base dénombrable et séparés  (théorème
\ref{theo- Rcr sur G separe}), il suffit de montrer que, si $\roi$
tend vers $\rho$ dans $\Rep$, alors, quitte à extraire, $\pi(\roi)$
tend vers $ \pi(\rho)$.
On peut tout d'abord supposer $\rho$ \cred{} (quitte à remplacer
$\rho$ par $\sigma$ cr dans $\ov{G \cdot \rho}$, et $\roi$ par
une suite extraite et conjuguée qui converge vers $\sigma$).
Si $\roi$ n'a pas de point fixe à l'infini, et est donc \cred{}, à
partir d'un certain rang, on a que $\pi(\roi)=G\cdot\roi$ tend vers
$G\cdot\rho=\pi(\rho)$ dans $\Rcr/G$, ce qui conclut.
Sinon, quitte à extraire, il existe pour tout $i$ un point fixe à
l'infini $\alpha_i$ de $\roi$, qu'on peut choisir de régularité
maximale et de type constant (c'est-à-dire dans une orbite de $G$
fixée).
Quitte à extraire, on peut supposer que $\alpha_i$ tend vers un point
$\alpha$ de $\bordinf \Es$, qui est alors fixé par $\rho$.
Quitte à remplacer $\roi$ par un conjugué $k_i \cdot \roi$ avec
$(k_i)_i$ une suite dans le sous-groupe compact $K=\Stab_G(\xo)$
convergeant vers $1_G$, on peut supposer que $\alpha_i$ est toujours
égal à $\alpha$ (en prenant $k_i$ tel que $k_i\alpha_i=\alpha$).
Comme $\rho$ est \cred{}, elle fixe un point $\beta$ de $\bordinf \Es$
opposé à $\alpha$.
La composée $\sigma_i$ de $\roi$ par la projection $p_{\alpha\beta}$
est alors une semisimplification de $\roi$ (proposition \ref{prop-
  reduction d'une rep parabolique} et corollaire \ref{coro- np dans Y
  faisceau implique Serre-cr}), et tend vers
$p_{\alpha\beta}\circ\rho=\rho$.
On a donc que $G\cdot\sigma_i=\pi(\roi)$ tend vers
$G\cdot\rho=\pi(\rho)$ dans $\Rcr/G$, ce qui conclut pour la
continuité de $\pi$.

L'application $\pi$ passe alors au quotient en $\ov\pi:\Rep//G\fleche
\Xcr$ continue, et on vérifie aisément que la restriction de $\ov
\psep: \Rep/G\fleche \Rep//G$ à $\Xcr$ en est un inverse.
\end{proof}

On inclut pour finir quelques propriétés de
$\Rep//G$ ayant un intérêt propre.

\begin{prop}
L'espace $\Xcr$ est localement compact et dénombrable à l'infini.
\end{prop}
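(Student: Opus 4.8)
Le plan est de traiter séparément les deux propriétés. La dénombrabilité à l'infini est immédiate : par le théorème~\ref{thm- semisimplification et PGQS}, la semisimplification $\pi:\Rep\fleche\Xcr$ est continue et surjective, et comme $\Rep$ est dénombrable à l'infini, en écrivant $\Rep=\bigcup_n K_n$ avec les $K_n$ compacts on obtient $\Xcr=\bigcup_n\pi(K_n)$, réunion dénombrable de compacts. Pour la locale compacité, on dispose déjà du fait que $\Xcr$ est séparé (théorème~\ref{theo- Rcr sur G separe}) et à base dénombrable, et l'idée directrice est de réaliser $\Xcr$ comme image d'une partie \emph{fermée} de $\Rep$ (donc localement compacte) par une application \emph{propre}.

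Concrètement, je fixerais $R>0$ tel que $C=\ov B(\xo,R)$ vérifie $G\cdot C=\Es$ (possible car l'action de $G$ sur $\Es$ est cocompacte), et je poserais
\[
\mathcal{M}=\{\rho\in\Rep\ :\ \min_{x\in C}\dro(x)=\laro\}.
\]
Comme $\rho\mapsto\min_{x\in C}\dro(x)$ est continue (par compacité de $C$ et continuité de $(\rho,x)\mapsto\dro(x)$), que $\la$ est semicontinue supérieurement (\ref{s- semicontinuite de laro}), et qu'on a toujours $\min_{x\in C}\dro\ge\laro$, l'ensemble $\mathcal{M}$ est fermé dans $\Rep$, donc localement compact et dénombrable à l'infini. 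De plus $\pi(\mathcal{M})=\Xcr$ : toute représentation cr $\sigma$ vérifie $\Min(\sigma)\neq\es$ (corollaire~\ref{coro- Serre-cr implique np dans Y faisceau}), et en amenant par un élément de $G$ un point de $\Min(\sigma)$ dans $C$, on obtient un conjugué appartenant à $\mathcal{M}$ et de même image que $\sigma$ par $\pi$.

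Il resterait à établir que $\pi_{\mathcal{M}}:=\pi|_{\mathcal{M}}$ est propre. Le point central est que $\la$ descend en une fonction \emph{continue} $\bar\la:\Xcr\fleche\RR^+$ : elle est bien définie et $G$-invariante puisque $\la(\pi(\rho))=\laro$ (proposition~\ref{prop- reduction d'une rep parabolique}, point~\ref{item- la reduction conserve la}) ; la semicontinuité supérieure de $\la$ en fournit une inégalité, et l'autre provient de la proposition~\ref{prop- semisimplification par Minro} (point~\ref{item- conjuguer vers laroi mini}) appliquée à un relèvement $\sigma_n\fleche\sigma$ par des représentations cr (l'application $\Rcr\fleche\Xcr$ étant ouverte), combinée à la continuité de $\pi$. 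Dès lors, pour $L\subset\Xcr$ compact, on a $\bar\la\le M$ sur $L$ ; tout $\rho\in\pi_{\mathcal{M}}^{-1}(L)$ possède un $x\in C$ avec $d(x,\rho(s)x)\le\dro(x)=\laro=\bar\la(\pi(\rho))\le M$ pour tout $s\in S$, d'où $d(\xo,\rho(s)\xo)\le 2R+M$, et la propreté de l'action de $G$ sur $\Es$ force les $(\rho(s))_{s\in S}$ à rester dans un compact de $G^S$. Ainsi $\pi_{\mathcal{M}}^{-1}(L)$ est relativement compact dans $\Rep$, et fermé dans $\mathcal{M}$, donc compact. Comme $\Xcr$ est à base dénombrable, $\pi_{\mathcal{M}}$ est alors fermée, à fibres compactes, de source localement compacte et de but séparé, ce qui entraîne par l'argument standard (pour $\xi\in\Xcr$, choisir un ouvert relativement compact $U\supset\pi_{\mathcal{M}}^{-1}(\xi)$, alors $\Xcr\setminus\pi_{\mathcal{M}}(\mathcal{M}\setminus U)$ est un voisinage ouvert de $\xi$ d'adhérence contenue dans le compact $\pi_{\mathcal{M}}(\ov U)$) que $\Xcr$ est localement compact.

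L'essentiel de la difficulté se concentre dans la propreté de $\pi_{\mathcal{M}}$, et plus précisément dans la continuité de $\bar\la$ : c'est le seul endroit où l'on utilise réellement les résultats fins de semisimplification et la cocompacité. Une fois acquise la majoration de $\bar\la$ sur les compacts, la propreté de l'action de $G$ sur $\Es$ rend le reste de l'argument purement formel.
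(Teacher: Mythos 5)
Votre démonstration est correcte, mais elle suit une route réellement différente de celle du texte. La preuve du papier est beaucoup plus courte et « molle » : on prend une base dénombrable d'ouverts relativement compacts de $\Rep$, on la trace sur $\Rcr$, et on pousse par $\pi$ ; la restriction de $\pi$ à $\Rcr$ étant la projection ouverte $\Rcr\fleche\Rcr/G$, on obtient une base dénombrable d'ouverts de $\Xcr$, qui sont relativement compacts parce que $\pi$ est continue sur $\Rep$ tout entier (l'image d'un ouvert $U\cap\Rcr$ est contenue dans le compact $\pi(\ov U)$). Cela donne d'un coup la locale compacité et la dénombrabilité à l'infini, sans jamais faire intervenir $\la$. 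Vous, au contraire, construisez un fermé $\mathcal{M}\subset\Rep$ de type « ensemble des vecteurs minimaux » (dans l'esprit de la remarque du texte sur l'application moment, après la proposition \ref{prop- carac ss dans ES}) et montrez que $\pi|_{\mathcal{M}}$ est une surjection propre sur $\Xcr$ ; l'ingrédient clé est la majoration de $\la$ sur les compacts du quotient, combinée à la propreté de l'action de $G$ sur $\Es$. Tous les pas sont justes (la fermeture de $\mathcal{M}$ par semi-continuité supérieure de $\la$, la surjectivité via le corollaire \ref{coro- Serre-cr implique np dans Y faisceau} et la cocompacité, la propreté, puis l'argument standard « application fermée à fibres compactes depuis un localement compact »). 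Deux observations : d'une part, pour la propreté vous n'avez besoin que de la majoration $\limsup\bar\la(\xi_n)\leq\bar\la(\xi)$, c'est-à-dire de la semi-continuité supérieure de $\bar\la$ (qui découle directement du relèvement par l'application ouverte $\Rcr\fleche\Xcr$ et de \ref{s- semicontinuite de laro}), et non de sa continuité complète ; la partie « difficile » que vous invoquez est donc superflue ici. D'autre part, votre approche, plus lourde, achète quelque chose en retour : elle fournit une section fermée propre de $\pi$ et redémontre au passage l'essentiel de la proposition suivante du texte (continuité et propreté de $\la$ sur $\Rep//G$), là où la preuve du papier n'utilise que des arguments de topologie générale.
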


\begin{proof}
En effet l'image par $\pi:\Rep\fleche\Xcr$ de la trace sur $\Rcr$
d'une base dénombrable d'ouverts relativement compacts de $\Rep$
fournit une base dénombrable d'ouverts (car la restriction de $\pi$ à
$\Rcr$ est ouverte et surjective)
relativement compacts (car $\pi$ est continue sur $\Rep$) de $\Xcr$.
\end{proof}

\begin{prop}
L'application ``minimum de déplacement'' $\la:\Rep\fleche\RR^+$ est
continue, et passe au quotient en une fonction continue et propre sur
$\Rep//G$.
\end{prop}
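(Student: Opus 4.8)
Le plan est d'établir d'abord la continuité de $\la$ sur $\Rep$, puis d'en déduire les propriétés sur le quotient. La moitié facile est déjà acquise : $\la$ est semicontinue supérieurement (remarque de la section \ref{ss- deplacement}), \cad{} que $\limsup\laroi\leq\laro$ dès que $\roi\tend\rho$, ce qui résulte de la convergence ponctuelle $\droi(x)\tend\dro(x)$ en chaque $x$ fixé. Tout le travail porte donc sur la semicontinuité inférieure.

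Avant cela, je vérifierais que $\la$ est constante sur les classes d'équivalence $\sim$ et se factorise en $\ov\la$ sur $\Rep//G\cong\Xcr$. En effet, la semisimplification $\pi(\rho)$ s'obtient par une réduction $\rho\mapsto\rho_{\alpha\beta}$ (projection sur le Levi associé à un point fixe de régularité maximale, cr par le corollaire \ref{coro- np dans Y faisceau implique Serre-cr}), et une réduction conserve le minimum de déplacement : $\la(\rho_{\alpha\beta})=\laro$ (proposition \ref{prop- reduction d'une rep parabolique}, point \ref{item- la reduction conserve la}). On a donc $\ov\la\circ\pi=\la$, avec $\ov\la([\tau])=\la(\tau)$ pour $\tau$ cr ; et comme $\rho\sim\rho'$ équivaut à $\pi(\rho)=\pi(\rho')$ (théorème \ref{thm- semisimplification et PGQS}), la fonction $\ov\la$ est bien définie.

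Pour la semicontinuité inférieure, je raisonnerais par l'absurde. Supposons $\roi\tend\rho$ avec, après extraction, $\laroi\tend\ell<\laro$. La suite $(\laroi)$ étant bornée, la proposition \ref{prop- semisimplification par Minro} (point \ref{item- conjuguer vers laroi mini}) fournit une suite de conjugués $\rho'_{i_k}\in G\cdot\rho_{i_k}$ convergeant vers une représentation $\rho^*$ telle que $\la(\rho^*)=\ell$. Comme $\rho_{i_k}\tend\rho$ et $\rho'_{i_k}\tend\rho^*$ sont reliées par conjugaison, $\rho$ et $\rho^*$ sont $G$-voisines, donc $\ov{G\cdot\rho}\cap\ov{G\cdot\rho^*}\neq\es$ (théorème \ref{theo- orbites voisines}), \cad{} $\rho\sim\rho^*$. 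Par le paragraphe précédent, $\laro=\ov\la(\pi(\rho))=\ov\la(\pi(\rho^*))=\la(\rho^*)=\ell$, ce qui contredit $\ell<\laro$. Ainsi $\la$ est continue sur $\Rep$. Puisque $\psep:\Rep\fleche\Rep//G$ est une application quotient et que $\ov\la\circ\psep=\la$ est continue, $\ov\la$ est continue sur $\Rep//G$, donc sur $\Xcr$.

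Il reste la propreté de $\ov\la:\Xcr\fleche\RR^+$. Je prendrais une suite $([\tau_k])_k$ avec $\la(\tau_k)\leq R$ (représentants $\tau_k$ cr) et j'appliquerais à nouveau la proposition \ref{prop- semisimplification par Minro} pour extraire des conjugués $\tau'_{k_j}\tend\rho$ avec $\la(\rho)\leq R$. La continuité de $\pi$ et l'égalité $\pi(\tau'_{k_j})=[\tau_{k_j}]$ (car $\tau'_{k_j}$ est cr) donnent $[\tau_{k_j}]\tend\pi(\rho)$ dans $\Xcr$, avec $\ov\la(\pi(\rho))\leq R$ ; donc $\ov\la^{-1}([0,R])$ est séquentiellement compact, donc compact, $\Xcr$ étant métrisable à base dénombrable. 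L'obstacle principal est la semicontinuité inférieure : les points minimisant $\droi$ peuvent fuir à l'infini, et l'idée décisive est de ne pas lutter directement contre ce phénomène mais de conjuguer (proposition \ref{prop- semisimplification par Minro}) pour ramener les minimiseurs dans un compact, la limite obtenue restant $\sim$-équivalente à $\rho$, donc de même $\la$.
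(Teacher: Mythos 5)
Votre démonstration est correcte, et la partie « continuité » suit essentiellement la même route que l'article : factorisation de $\la$ par $\pi$ via la conservation du minimum de déplacement par réduction, puis utilisation de la proposition \ref{prop- semisimplification par Minro} pour conjuguer les $\roi$ vers une limite $\rho^*$ réalisant $\liminf\laroi$; là où l'article conclut par continuité de $\pi$ et séparation de $\Xcr$ ($\pi(\rho)=\pi(\rho^*)$), vous concluez par la relation de $G$-voisinage et le théorème \ref{theo- orbites voisines} ($\rho\sim\rho^*$) — deux formulations équivalentes du même argument. En revanche, votre preuve de la propreté diverge réellement de celle de l'article : vous établissez la compacité séquentielle de $\ov\la^{-1}([0,R])$ en réappliquant la proposition \ref{prop- semisimplification par Minro} et la continuité de $\pi$, puis invoquez la métrisabilité de $\Xcr$ (qui découle bien du fait qu'il est séparé, localement compact et à base dénombrable, mais que vous affirmez sans la justifier). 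L'article procède plus directement : par cocompacité de l'action de $G$ sur $\Es$, tout $\rho$ avec $\laro<D$ admet un conjugué $g\cdot\rho$ vérifiant $d_{g\cdot\rho}(\xo)\leq C$, de sorte que le sous-niveau dans $\Rep//G$ est l'image par $\psep$ du compact $\{\dro(\xo)\leq C\}$ (compact par propreté de l'action de $G$ sur $\Es$). L'argument de l'article est plus élémentaire — il n'utilise ni la continuité de $\pi$ ni aucun argument séquentiel ou de métrisabilité — tandis que le vôtre recycle la machinerie déjà établie au prix d'une petite estimation géométrique en moins; les deux sont valables.
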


\begin{rema}
Dans un espace $\CAT0$ quelconque, $\la$ est semicontinue, mais elle
n'est pas continue en général (par exemple, dans le cas du plan euclidien).
\end{rema}

\begin{proof}
Montrons tout d'abord que $\la$ passe au quotient. 
On a $\la(\pi(\rho))=\laro$ par la proposition \ref{prop- reduction
  d'une rep parabolique} (\ref{item- la reduction conserve la})(car
$\pi(\rho)$ est une réduction de $\rho$).  Donc $\la$ passe au
quotient en une fonction $\Rep//G\fleche \RR^+$ qu'on notera aussi
$\la$.

Montrons maintenant la continuité de $\la$ sur $\Rep$ : 
soit $(\roi)_{i\in\NN}$ une suite dans $\Rep$ telle que $\roi\tend \rho$.
Alors $\la(\roi)$ est borné. Supposons (quitte à extraire) 
que $\laroi\tend\ell$.
On peut alors choisir une suite de conjugués $\rho'_i$ de $\rho_i$
telle que $\rho'_i\tend\rho'$ avec $\la(\rho')=\ell$
(prop. \ref{prop- semisimplification par Minro}, (\ref{item- conjuguer
  vers laroi mini})).
On a que $\pi(\rho'_i)\tend \pi(\rho')$ et $\pi(\rho_i)\tend \pi(\rho)$
par continuité de $\pi$ (thm \ref{thm- semisimplification et PGQS}), or
$\pi(\rho'_i)=\pi(\roi)$ pour tout $i$, 
donc $\pi(\rho)=\pi(\rho')$ (car $\Xcr$ séparé).  On a vu qu'alors
$\laro=\la(\rho')=\ell$, ce qui conclut.

L'application quotient $\la:\Rep//G\fleche \RR^+$ est donc continue.
Montrons qu'elle est propre.
Soit $D >0$. Comme $G$ agit cocompactement sur $\Es$, il existe
$C\in\RR$ tel que si $\laro < D$ alors il existe $g\in G$ tel que
$d_{g\cdot \rho}(\xo)\leq C$.
En particulier $\psep(\rho)$ est alors dans l'image par $\psep$
continue du compact $\{\rho\in\Rep,\ \dro(\xo)\leq C\}$ de $\Rep$, qui
est un compact de $\Rep//G$, ce qui conclut.
\end{proof}

\end{document}